\newtheorem{proposition}{Proposition}
\newtheorem{lemma}{Lemma}
\newtheorem{corollary}{Corollary}
\newtheorem{definition}{Definition}
\newtheorem{remark}{Remark}
\newtheorem{example}{Example}
\begin{document}

\title{A ``barbell'' in a central force field:\\
A case study in symmetry reduction}

\author{
J\"urgen Scheurle\\
Zentrum Mathematik, TU M\"unchen\\
Boltzmannstr.~3, 85747 Garching, Germany\\
\tt{scheurle@ma.tum.de}\\
    \\
 Sebastian Walcher\\
Lehrstuhl A f\"ur Mathematik, RWTH Aachen\\
52056 Aachen, Germany\\
\tt{walcher@matha.rwth-aachen.de}
}

\date{}

\maketitle

\begin{abstract} We present an application of a recently introduced variant of orbit space reduction for symmetric dynamical systems. This variant works with suitable localizations of the algebra of polynomial invariants of the group actions, and provides reduction to a variety that is embedded in a low-dimensional affine space, which makes efficient computations possible. As an example, we discuss the mechanical system  of a ``barbell'' in a central force field. \\
{\bf Key words:} Linear groups, invariant theory, Hamiltonian systems with symmetry, relative equilibria\\
{\bf MSC (2010):} 34C20, 13A50, 34C14, 37C80
\end{abstract}

\bigskip

\section{Introduction}
Vladimir I.~Arnold made fundamental contributions to various mathematical disciplines and opened up new perspectives in several fields such as geometric mechanics, differential equations and dynamical sytems. Among his most celebrated works are those on the Hamiltonian stuctures of the Euler equations for rigid bodies and in fluid dynamics \cite{ArnoldGren,ArnoldUsp}. Symmetry reduction played an important role in these and other works of Arnold. A general abstract framework was developed by Marsden and Weinstein a few years later; see \cite{MarWei,Marsden, MarRat}. In addition to his groundbreaking research, Arnold authored several influential monographs on topics in classical mechanics, differential equations and dynamical systems; we only mention \cite{ArnoldMech,ArnoldGM} here.\\

The present paper is concerned with symmetry reduction and its application to a particular Hamiltonian system from mechanics. The general guideline for such reductions can be found e.g. in Arnold \cite{ArnoldMech} (Appendix 5), Marsden and Weinstein \cite{MarWei}, Kummer \cite{Kummer}, and Cushman and Bates \cite{CuBa} (Ch.~VII). The aspect we want to emphasize here is an efficient computational (``algebraic'') reduction procedure via invariants.\\

The mechanical system we consider is a ``barbell'' (two mass points connected by a rigid link) subject to a central force field in two-dimensional space. This system admits a linear symmetry group and -- as is common knowledge (see for example Chossat \cite{Cho}) -- one may employ the polynomial invariants of this symmetry group to construct a reduced system defined on an affine algebraic variety. The problem with the given system -- as well as many others -- is the high dimension of the embedding space for the variety, which renders any practical work with the reduced system almost impossible. We circumvent this difficulty by passing to suitable localizations of the polynomial invariant algebra. This method was recently introduced in \cite{SchrWa}, building on work by Grosshans \cite{Gros}. Thus one obtains an efficient reduction, which, in particular, allows the discussion of relative equilibria and their stability properties.\\

The plan of the paper is as follows. In section 2 we review symmetry reduction and then outline reduction via localizations, for general as well as for Hamiltonian systems.  In section 3 we introduce the ``barbell''
system, its symmetry group (a representation of $SO(2)$), and compute its symmetry reduction with
respect to a suitable localization of the invariant algebra of the
symmetry group. This particular reduction derives from a systematic
application of the theory developed in \cite{SchrWa}. It naturally yields a reduced
system with a Poisson structure in $\mathbb R^5$ which admits a first
integral due to angular momentum conservation. In turn, this allows a
further reduction to a Hamiltonian system in $\mathbb R^4$, which may be seen to be
the best possible outcome. To provide evidence for the practical
advantage of reduction via localization, we
first discuss the complete dynamics of the barbell system in the case
of a harmonic attracting force field. Second, in section 4 we consider
relative equilibria of the system, as well as their (linearized)
stability properties, in much more general situations showing that the
behavior of the system is quite intricate. In section 5, a few remarks
conclude the paper.

%%%%%%%%%%%%%%%%%%%%%%%%%%%%%%%%%%%%%%%%%%%%%
\section{A review of symmetry reduction}
For the readers' convenience we recall here some familiar (and some perhaps non-familiar) facts concerning symmetry reduction of ordinary differential equations, and introduce some notation.
\subsection{The basics }
Consider an autonomous ordinary differential equation
\begin{equation}\label{ode}
\dot x=F(x)
\end{equation}
on some nonempty and open subset of $\mathbb R^n$, with $F$ smooth (as a matter of convenience).  A (local) symmetry of this differential equation is a (local) diffeomorphism $\Phi$ that sends solutions to solutions (respecting the time parameterization). A necessary and sufficient criterion for this property is that the identity
\[
D\Phi(x)F(x)=F(\Phi(x))
\]
holds for all $x$.
\begin{definition}
The {\em Lie derivative} of a smooth scalar valued function $\psi$ with respect to the vector field $F$ is defined by
\[
\psi\mapsto L_F(\psi); \quad L_F(\psi)(x):=D\psi(x)F(x).
\]
\end{definition}
The Lie derivative measures the rate of change of $\psi$ along solutions $z(t)$ of \eqref{ode}, as
\[
\frac{d}{dt}\phi(z(t))=L_F(\phi)(z(t)).
\]
This fact is used for symmetry reduction by  invariants, as outlined in the following points.
\begin{itemize}
\item If $\Phi$ is a local symmetry of \eqref{ode} and $\psi$ is an invariant of $\Phi$, i.e. $\psi\circ\Phi=\psi$, then $L_F(\psi)$ is also an invariant of $\Phi$.\\
To verify this, differentiate
\[
\psi(\Phi(x))=\psi(x)\Rightarrow D\psi(\Phi(x))D\Phi(x)=D\psi(x)
\]
and use the symmetry condition to obtain
\[
\begin{array}{rcl}
L_F(\psi)(\Phi(x))&=&D\psi(\Phi(x))F(\Phi(x))\\
  &=&D\psi(\Phi(x))D\Phi(x)D\Phi(x)^{-1}F(\Phi(x))=D\psi(x)F(x).
\end{array}
\]
\item Therefore, if $\cal G$ is a collection of local symmetries of \eqref{ode}, and $\psi_1,\ldots,\psi_m$ are common invariants of the elements of $\cal G$ then every $L_F(\psi_j)$ is also a common invariant, $1\leq j\leq m$. If, furthermore, every common invariant of $\cal G$ can be expressed as a (smooth) function of the $\psi_j$ then the identities
\[
L_F(\psi_j)=\gamma_j(\psi_1,\ldots,\psi_m),\quad 1\leq j\leq m
\]
imply that solutions of \eqref{ode} are mapped to solutions of 
\[
\dot y_j=\gamma_j(y_1,\ldots,y_m),\quad 1\leq j\leq m
\]
by the Hilbert map
\[
\Psi:=\begin{pmatrix}\psi_1\\ \vdots \\ \psi_m\end{pmatrix}.
\]
\item The local setting described above transfers to global actions of Lie groups on $\mathbb R^n$ or some submanifold, with some restrictions. Locally, the existence of $\psi_1,\ldots,\psi_m$ is guaranteed by Frobenius' theorem, near any point with maximal dimension of its group orbit. (Globally one will have singular reduction in general; see e.g. Field \cite{Fie} for actions of compact groups.) Note that the Frobenius argument is not constructive (from an ``algebraic'' perspective), since it relies on the implicit function theorem. 
\end{itemize}

\subsection{Construction of reduced equations}
We now restrict attention to the natural action of an algebraic subgroup $G$ of $GL(n,\mathbb R)$ on $\mathbb R^n$, and a polynomial vector field $F$ that is symmetric with respect to $G$ (thus $TF(x)=F(Tx)$ for all $T\in G$). These assumptions are not as restrictive as they may seem, in view of classical linearization theorems for certain group actions (see e.g. Bredon \cite{Bre}, Thm.~4.1, for the compact case, and  Kushnirenko \cite{Kus} for semisimple groups). Moreover one should note Schwarz's \cite{Schw} and Poenaru's \cite{Poe} theorems on invariant smooth functions, resp. smooth symmetric vector fields (see also Luna \cite{Lun}). In this scenario reduction by invariants is in principle a constructive matter whenever the polynomial invariant algebra is finitely generated. 
\begin{proposition}\label{classred} If the invariant algebra $\mathbb R\left[x_1,\ldots,x_n\right]^G$ admits the finite set $\psi_1,\ldots, \psi_m$ of generators then the Hilbert map $\Psi=\left(\psi_1,\ldots, \psi_m\right)^{\rm tr}$ sends the $G$-symmetric vector field $F$ to some polynomial vector field $P$ on $\mathbb R^m$. The equation $\dot y=P(y)$ admits as an invariant set the algebraic variety $Z$ which is defined as the Zariski closure of $\Psi(\mathbb R^n)$ and determined by the polynomial relations between $\psi_1,\ldots,\psi_m$.
\end{proposition}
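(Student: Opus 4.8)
The plan is to prove the proposition in two stages: first constructing the reduced polynomial field $P$ and verifying that $\Psi$ intertwines the two flows, then showing that the Zariski closure $Z$ is left invariant. For the first stage I would start from the fact, already established in the bullet points above, that $L_F(\psi_j)$ is again a $G$-invariant. Indeed, each $T\in G$ acts as a linear symmetry of \eqref{ode}: when $\Phi(x)=Tx$ is linear, the symmetry condition $D\Phi(x)F(x)=F(\Phi(x))$ reduces to $TF(x)=F(Tx)$, which is precisely the assumed $G$-symmetry, and $\psi_j$ is invariant under every such $T$. Since $\psi_j$ and $F$ are polynomial, $L_F(\psi_j)=D\psi_j\cdot F$ is a polynomial lying in $\mathbb R[x_1,\dots,x_n]^G$. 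Because $\psi_1,\dots,\psi_m$ generate this algebra, there exist polynomials $P_j\in\mathbb R[y_1,\dots,y_m]$ with
\[
L_F(\psi_j)=P_j(\psi_1,\dots,\psi_m),\qquad 1\le j\le m,
\]
and these assemble into the desired field $P=(P_1,\dots,P_m)^{\rm tr}$. That $\Psi$ maps solutions to solutions is then the chain-rule computation from the second bullet: if $z(t)$ solves \eqref{ode}, then $\tfrac{d}{dt}\psi_j(z(t))=L_F(\psi_j)(z(t))=P_j(\Psi(z(t)))$, so $y(t):=\Psi(z(t))$ solves $\dot y=P(y)$.

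For the second stage I would describe $Z$ algebraically. Let
\[
I=\{q\in\mathbb R[y_1,\dots,y_m]:\ q(\psi_1,\dots,\psi_m)=0\}
\]
be the ideal of polynomial relations among the $\psi_j$; it is the kernel of the substitution homomorphism $y_j\mapsto\psi_j$ into the domain $\mathbb R[x_1,\dots,x_n]$, hence prime (in particular radical), and $Z=V(I)$ is its zero locus. The crucial observation is that $P$ is tangent to $Z$ in the strong algebraic sense $L_P(I)\subseteq I$. This follows by differentiating a relation along $F$: for $q\in I$ the chain rule gives
\[
0=L_F\big(q(\psi_1,\dots,\psi_m)\big)=\sum_{j}\frac{\partial q}{\partial y_j}(\Psi)\,L_F(\psi_j)=\big(L_P(q)\big)(\psi_1,\dots,\psi_m),
\]
where the last equality uses $L_F(\psi_j)=P_j(\Psi)$ together with the definition of $L_P$. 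Thus $L_P(q)\in I$.

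Finally I would deduce invariance of $Z$ from $L_P(I)\subseteq I$. Choosing generators $q_1,\dots,q_r$ of $I$ and writing $L_P(q_i)=\sum_k a_{ik}q_k$ with polynomial coefficients $a_{ik}$, the functions $g_i(t):=q_i(y(t))$ along any solution $y(t)$ of $\dot y=P(y)$ satisfy the linear homogeneous system $\dot g_i=\sum_k a_{ik}(y(t))\,g_k$; if $y(0)\in Z$ then $g_i(0)=0$ for all $i$, and uniqueness of solutions of this linear system forces $g_i\equiv 0$, so $y(t)\in Z$ throughout its interval of existence. Alternatively one may argue topologically: $\Psi(\mathbb R^n)$ is flow-invariant by the first stage, and continuity of the flow together with continuous dependence on initial data propagates this invariance to the closure $Z$. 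The main point requiring care is exactly this last transition: the image $\Psi(\mathbb R^n)$ is in general \emph{not} Zariski closed, so set-theoretic invariance of the image does not by itself yield invariance of $Z$, and it is the tangency identity $L_P(I)\subseteq I$ that bridges the gap.
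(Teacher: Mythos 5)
Your proposal is correct, and it actually supplies more than the paper itself does: the paper states Proposition~\ref{classred} without proof, deferring to \cite{Sche} and \cite{Cho}, with only the first half implicitly justified by the bullet-point computation in Section~2.1. Your first stage reproduces exactly that computation (each $T\in G$ is a linear symmetry, so $L_F(\psi_j)$ is again a polynomial invariant, hence expressible as $P_j(\psi_1,\dots,\psi_m)$, and the chain rule shows $\Psi$ intertwines the flows). Your second stage is the genuinely added content: the identity $L_P(I)\subseteq I$ for the (prime) ideal of relations, followed by the finite-generation/linear-ODE uniqueness argument, is the standard and correct way to conclude that $Z=V(I)$ is flow-invariant, and you rightly emphasize that this step cannot be skipped because $\Psi(\mathbb R^n)$ need not be Zariski closed, so invariance of the image alone says nothing about points of $Z$ outside the image. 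One caution on your ``alternative'' topological argument: continuity of the flow propagates invariance only to the \emph{Euclidean} closure of $\Psi(\mathbb R^n)$, which may be strictly smaller than the Zariski closure $Z$ (the flow maps are not regular morphisms, so they need not respect Zariski closure); since you yourself flag the algebraic tangency identity as the real bridge, this does not damage the proof, but the topological route should not be presented as an independent alternative for the full claim.
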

This procedure is known as {\em orbit space reduction}; see \cite{Sche} and Chossat \cite{Cho}. Its main drawback is that for many interesting group actions the polynomial invariant algebra (while finitely generated) needs a large number of generators; hence one has reduction to the variety $Z$ which is embedded in some (necessarily) high dimensional ambient space. This fact makes practical work with the reduced system awkward and frequently impossible. To circumvent this dilemma, one may employ localizations of the invariant algebra to achieve reduction to a rational system with powers of a single polynomial as denominators. This approach, which builds on Grosshans \cite{Gros}, is presented in detail in \cite{SchrWa}. We state a version here that is most appropriate for the application we will discuss. For proofs see \cite{SchrWa} (and use \cite{GSW}, Cor.~2.7, with regard to the characterization of $v$).
\begin{proposition}\label{locred} Let $\mathbb K$ denote $\mathbb R$ or $\mathbb C$, and let $G$ be an algebraic subgroup of $GL(n,\mathbb K)$ which acts naturally on $\mathbb K^n$, with finitely generated invariant algebra.
 Denote by $Q$ the quotient field
of $\mathbb K\left[x_1,\ldots,x_n\right]^G$ and let $q$ be its transcendence degree over $\mathbb K$. (Thus $q$  is at most equal to $n-s$, with $s$ the generic orbit dimension of the group action.)
Then for any $v$ with trivial isotropy group $G_v$ there exist an integer $\ell$ with $q\leq \ell\leq q+1$ and $\psi,\psi_1,\ldots,\psi_\ell\in\mathbb K\left[x_1,\ldots,x_n\right]^G$ such that
$$\mathbb K\left[x_1,\ldots,x_n\right]^G\left[\frac1\psi\right]=\mathbb K\left[\psi_1,\ldots,\psi_\ell\right]\left[\frac1\psi\right]\quad\text{and}\quad\psi(v)\not=0.$$
In particular every polynomial invariant can be written as the quotient of some polynomial in $\psi_1,\ldots,\psi_\ell$ and some power of $\psi$, in a Zariski neighborhood of $v$.
\end{proposition}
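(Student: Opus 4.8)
The plan is to reduce the statement to three ingredients: the selection of field generators via a separating transcendence basis and a primitive element, the clearing of denominators that turns these field generators into algebra generators after a single localization, and finally the geometric step that forces the denominator to be nonzero at $v$. Since $\mathbb{K}$ has characteristic zero, separability comes for free, which is what makes the first two ingredients routine and isolates the genuine difficulty in the third.

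First I would fix a finite generating set $a_1,\ldots,a_N$ of $A:=\mathbb{K}[x_1,\ldots,x_n]^G$, so that $Q=\mathbb{K}(a_1,\ldots,a_N)$. After reordering I extract a transcendence basis $\psi_1:=a_1,\ldots,\psi_q:=a_q$ of $Q$ over $\mathbb{K}$; this is possible since $q$ is the transcendence degree, and the bound $q\le n-s$ reflects that a generic fibre of the Hilbert map has dimension $n-q$ and contains an orbit of dimension $s$. In characteristic zero this basis is automatically separating, so $Q$ is a finite separable extension of $\mathbb{K}(\psi_1,\ldots,\psi_q)$, and the primitive element theorem yields a single generator $\psi_{q+1}\in A$ (one may take a suitable $\mathbb{K}$-linear combination of $a_{q+1},\ldots,a_N$) with $Q=\mathbb{K}(\psi_1,\ldots,\psi_{q+1})$. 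When $Q$ is already rational over $\mathbb{K}$ with respect to a transcendence basis drawn from $A$ one sets $\ell=q$; otherwise $\ell=q+1$. The lower bound $\ell\ge q$ is forced because $\ell$ field generators cannot produce transcendence degree exceeding $\ell$.

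The localization step is then a denominator-clearing argument. Each generator $a_j$ lies in $Q=\operatorname{Quot}\bigl(\mathbb{K}[\psi_1,\ldots,\psi_\ell]\bigr)$, hence equals $p_j/d_j$ with $p_j,d_j\in\mathbb{K}[\psi_1,\ldots,\psi_\ell]$. Taking $\psi$ to be a common multiple of the $d_j$ places every $a_j$, and therefore all of $A$, inside $\mathbb{K}[\psi_1,\ldots,\psi_\ell][1/\psi]$; the reverse inclusion is immediate because $\psi_1,\ldots,\psi_\ell,\psi\in A$. Localizing both sides at $\psi$ gives the asserted identity $A[1/\psi]=\mathbb{K}[\psi_1,\ldots,\psi_\ell][1/\psi]$, from which the final ``in particular'' assertion --- that every invariant is a polynomial in the $\psi_i$ divided by a power of $\psi$ on the Zariski-open set $\{\psi\ne 0\}$ --- is immediate.

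The genuine obstacle is the requirement $\psi(v)\ne 0$, since the denominator produced above is a priori uncontrolled at $v$, and multiplying $\psi$ by further invariants can only create, never remove, zeros. Here I would invoke the characterization of admissible base points: the locus of points with trivial isotropy is Zariski open and $G$-invariant, and by \cite{GSW}, Cor.~2.7, it is cut out by the nonvanishing of a suitable invariant. The task is to show that the primitive element $\psi_{q+1}$ and the common denominator $\psi$ can be arranged so that $\{\psi\ne 0\}$ contains this entire good locus --- equivalently, that the degeneracy of the reduction is confined to its complement, where orbits are non-generic. I expect this reconciliation of the purely algebraic denominator-clearing with the geometric triviality of $G_v$ to be the crux of the argument, and it is precisely the point for which the cited results of \cite{SchrWa} and \cite{GSW} are needed.
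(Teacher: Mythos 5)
Your first two steps are sound: extracting a transcendence basis from a finite generating set of $A:=\mathbb K\left[x_1,\ldots,x_n\right]^G$, invoking the primitive element theorem (valid since $\mathbb K$ has characteristic zero and is infinite, so the primitive element may be taken as a $\mathbb K$-linear combination of the remaining generators and hence lies in $A$), and clearing denominators with $\psi:=\prod_j d_j$ do give an $\ell$ with $q\leq\ell\leq q+1$ and the identity $A\left[\frac1\psi\right]=\mathbb K\left[\psi_1,\ldots,\psi_\ell\right]\left[\frac1\psi\right]$ for \emph{some} invariant $\psi$. This part is correct, but it is the routine half of the proposition.

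The genuine gap is that the final clause, $\psi(v)\neq 0$ for an arbitrarily prescribed $v$ with trivial isotropy, is never proved: your last paragraph names it as the crux and then defers it wholesale to \cite{SchrWa} and \cite{GSW}. Concretely, the $\psi$ your construction produces is an uncontrolled product of denominators attached to arbitrary choices (ordering of the generators, choice of primitive element), and nothing prevents its zero set from containing $v$; completing the proof requires showing that these choices can be remade, for the given $v$, so that $v$ survives the localization. That is exactly where the hypothesis $G_v=\{e\}$ must enter -- your argument never uses it -- and where the substance of Grosshans' localization theory \cite{Gros}, of \cite{SchrWa}, and of \cite{GSW}, Cor.~2.7 lies. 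Note also that your proposed repair asks for more than the statement does: you aim for a single $\psi$ whose nonvanishing locus contains the whole trivial-isotropy stratum, whereas the quantifiers allow $\psi,\psi_1,\ldots,\psi_\ell$ to depend on $v$ (different points may need different localizations), and the auxiliary claims you lean on (that the stratum is Zariski open and is cut out by the nonvanishing of one invariant) are themselves left unjustified. To be fair, the paper gives no self-contained proof either and cites the same references; but judged as a blind proof, your text establishes only the generic localization statement, not the pointwise control at $v$ that is the proposition's actual point.
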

\begin{remark}\label{locrem}
{\em 
\begin{itemize}
\item One may use this Proposition to construct Hilbert maps of the type
\[
x\mapsto\begin{pmatrix}\psi_1\\ \vdots \\ \psi_\ell\\ \psi\end{pmatrix}, \text{  resp.  }x\mapsto\begin{pmatrix}\psi_1\\ \vdots \\ \psi_\ell\end{pmatrix}
\]
to reduce symmetric systems; the latter version works whenever $\psi\in \mathbb K[\psi_1,\ldots,\psi_\ell]$.
\item Note that $\ell=q$ and $\psi\in \mathbb K[\psi_1,\ldots,\psi_\ell]$ is the best situation one can hope for, since the generic orbit dimension determines the dimension of the quotient modulo the group action (in whichever way the quotient is realized). This best possible case does occur for toral subgroups, as is indicated by the examples in \cite{SchrWa} and proven in general in R.~Schroe\-ders' dissertation \cite{SchrDiss}.
\end{itemize}
}
\end{remark}
\subsection{The Hamiltonian setting}
Reduction for a symmetric Hamiltonian system (provided that the symplectic structure, resp. the Poisson bracket, is compatible with the group action) will produce a Hamiltonian system. A proof is given, and the procedure
is described, in a precise step-by-step manner in the monograph \cite{CuBa} by Cushman and Bates; see in particular Ch.~VII. (As noted earlier, other relevant sources are Arnold \cite{ArnoldMech}, Kummer \cite{Kummer} and Marsden/Weinstein \cite{MarWei}.) Cushman and Bates discuss the global scenario, with a Lie group acting on a symplectic manifold, and rather weak assumptions (properness) concerning the group action. The technical difficulty is that the reduction is singular in general;  actual computations are also carried out with the help of invariants.\\
Given a polynomial or rational Hamiltonian system that is symmetric with respect to an algebraic group action, (i.e., its Hamiltonian function is group invariant), and assuming that the structure matrix of the Poisson bracket has polynomial or rational entries, a direct method to determine a reduced Hamiltonian system (together with the Poisson bracket induced by the reduction) by polynomial invariants was introduced in \cite{SSW} and applied to a class of examples. In most instances this method amounts to a convenient computational shortcut for certain cases of the general reduction procedure in \cite{CuBa}, but the approach in \cite{SSW} applies to a different class of groups in comparison to \cite{CuBa} (including some non-reductive ones). In the present work we will slightly modify the approach from \cite{SSW} to discuss the barbell as a mechanical system.

%%%%%%%%%%%%%%%%%%%%%%%%%%%%%%%%%%%%%%%%%%%%%%%%
\section{The system and its symmetry reductions}
\subsection{The system}
We consider a ``barbell'' that  consists of two mass points (with positive masses $m_1,\,m_2$)  in a central force field in the plane $\mathbb R^2$; they are connected by a massless rigid link of length $\ell$. Denote by $x$ the position and by $y=\dot x$ the velocity of the first particle, and by $z$ resp. $w$ the position and velocity of the second particle. The force field is characterized by a (sufficiently smooth) function $U: (0,\,\infty)\to \mathbb R$, $r\mapsto U(r)$, such that $m_1U(x_1^2+x_2^2)$ is the potential energy of the first particle and  $m_2U(z_1^2+z_2^2)$ the potential energy of the second one. (Slightly abusing terminology, we will sometimes call $U$ the potential.) The Hamiltonian of the unconstrained system of two particles is given by
\begin{equation}\label{freeham}
H=\frac12 m_1\left<y,y\right>+\frac12 m_2\left<w,w\right>+m_1U(\left<x,x\right>)+m_2U(\left<z,z\right>)
\end{equation}
with the standard scalar product $\left<\cdot,\cdot\right>$ on $\mathbb R^2$.
The canonical Poisson bracket on the tangent bundle $T(\mathbb R^2\times \mathbb R^2)\cong \mathbb R^8$ is given by
\begin{equation}\label{PBstan}
\begin{array}{rcl}
\{f,g\}&=&\frac1{m_1}\sum_i\left(\frac{\partial f}{\partial x_i}\frac{\partial g}{\partial y_i}-\frac{\partial f}{\partial y_i}\frac{\partial g}{\partial x_i}\right)\\
   &+&\frac1{m_2}\sum_i\left(\frac{\partial f}{\partial z_i}\frac{\partial g}{\partial w_i}-\frac{\partial f}{\partial w_i}\frac{\partial g}{\partial z_i}\right).\\
\end{array}
\end{equation}
By scaling we may assume that $\ell=1$, and we will do so from now on. The constraints are then described by
\begin{equation}\label{constr}
\begin{array}{rcccl}
c_1&:=& \left<x-z,x-z\right>-1&=&0;\\
c_2&:=&\left<x-z,y-w\right>&=&0.
\end{array}
\end{equation}
For the following discussion, a change of coordinates will sometimes be convenient; therefore we  introduce
\begin{equation}\label{uvcoords}
u:=x-z,\quad v:=y-w.
\end{equation}
Note that $c_1 =\left<u,u\right>-1$ and $c_2=\left<u,v\right>$ admit particularly simple expressions then.\\
To work out the equations of motion for the constrained system, we follow the general procedure in Cushman and Bates \cite{CuBa} to determine the Poisson-Dirac brackets on the constraint manifold.
With
\begin{equation}\label{Mdef}
\begin{array}{rcl}
\{c_1,c_2\}&=&\left(\frac1{m_1}+\frac1{m_2}\right)\cdot2\left<x-z,x-z\right>=\frac{2(m_1+m_2)}{m_1m_2}\\
\{c_1,H\}&=&  0;\\
\{c_2,H\}&=& \left<y-w,y-w\right>-2\left<x-z,U^\prime(\left<x,x\right>)x-U^\prime(\left<z,z\right>)z\right>
\end{array}
\end{equation}
and introducing the abbreviations
\[
\begin{array}{rcl}
M&:=& \frac{m_1m_2}{2(m_1+m_2}\\
A&:=& \left<y-w,y-w\right>-2\left<x-z,U^\prime(\left<x,x\right>)x-U^\prime(\left<z,z\right>)z\right>
\end{array}
\]
one obtains the matrix
\[
C:=\begin{pmatrix} 0&\{c_1,c_2\}\\ \{c_2,c_1\}&0\end{pmatrix}^{-1}=M\cdot\begin{pmatrix}0&-1\\ 1&0\end{pmatrix}
\]
on the constraint manifold defined by $c_1=c_2=0$ (cf.~Cushman and Bates \cite{CuBa}, eq. (36) on p.~302), which in turn gives rise to the Poisson-Dirac bracket
\begin{equation}\label{PBDir}
\{f,g\}^*:=\{f,g\}-\left(\{f,c_1\},\,\{f,c_2\}\right)\cdot C\cdot\begin{pmatrix}\{c_1,g\}\\ \{c_2,g\}\end{pmatrix}
\end{equation}
The time evolution of any function $q$ along solutions of the constrained system is then given by
\begin{equation}\label{qchange}
\dot q=\{q,H\}^*=\{q,H\}+M\cdot\{q,c_1\}\cdot A,
\end{equation}
in other words by the Lie derivative of $q$ with respect to the Hamiltonian vector field of $H$.
In particular, one obtains the equations of motion, which we write down component-wise for $u,v,z$ and $w$:
\begin{equation}\label{eqmotion}
\begin{array}{rcccl}
\dot u_i&=&\{u_i,H\}^*&=& v_i\\
\dot v_i&=&\{v_i,H\}^*&=& -2U^\prime(\left<z+u,z+u\right>)\cdot (z_i+u_i) +2U^\prime(\left<z,z\right>)\cdot z_i-\widetilde A\cdot u_i\\
\dot z_i&=&\{z_i,H\}^*&=& w_i\\
\dot w_i&=&\{w_i,H\}^*&=& -2U^\prime(\left<z,z\right>)\cdot z_i+\frac{m_1}{m_1+m_2}\cdot\widetilde A\cdot u_i\\
\end{array}
\end{equation}
Here $i\in\{1,2\}$ in each case, and
\begin{equation}\label{Adef1}
\widetilde A:= \left<v,v\right>-2\left<u,U^\prime(\left<u+z,u+z\right>)\cdot (u+z)-U^\prime(\left<z,z\right>)\cdot z\right>.
\end{equation}
One should note here that there exists abundant literature on dynamics and symmetry reduction of rigid bodies in (three dimensional) gravitational force fields, which typically is based on physical insight and geometric considerations in the spirit of Arnold, Marsden and others. We mention only Wang et al. \cite{WKM} as one representative of such work. In contrast, we present an approach that might be called ``algebraic'' and is amenable to algorithmic methods. Moreover it will prove to be well-suited for explicit computations.
%%%%%%%%%%%%%%%%%%%%%%%%%%%%%%%%%%%%%%%%%%%%%%%
\subsection{The symmetry group and its  invariants}
System \eqref{eqmotion} admits the representation of the planar rotation group on $\mathbb R^8$, given by
\[
\widetilde R:=\begin{pmatrix}R&0&0&0\\0&R&0&0\\0&0&R&0\\0&0&0&R\end{pmatrix},\quad R=\begin{pmatrix}\cos\theta&-\sin\theta\\ \sin\theta&\cos\theta\end{pmatrix}
\]
as a symmetry group $G$. (This matrix representation holds with respect to $x,y,z,w$ as well as $u,v,z,w$.) To compute the symmetry reduction, we first list a generator system for the polynomial invariants.
\begin{lemma}\label{rholem}
The polynomial  invariant algebra of $G$ is generated by the sixteen polynomials
\begin{equation}
\begin{array}{ccc}
\rho_1=u_1^2+u_2^2;&\rho_2=v_1^2+v_2^2;&\rho_3=z_1^2+z_2^2;\\
\rho_4=w_1^2+w_2^2;&\rho_5=u_1v_1+u_2v_2;& \rho_6=u_1z_1+u_2z_2;\\
\rho_7=u_1w_1+u_2w_2;&\rho_8=v_1z_1+v_2z_2;&\rho_9=v_1w_1+v_2w_2;\\
\rho_{10}=z_1w_1+z_2w_2;&\rho_{11}=v_1u_2-u_1v_2;&\rho_{12}=z_1u_2-u_1z_2;\\
\rho_{13}=w_1u_2-u_1w_2;&\rho_{14}=z_1v_2-v_1z_2;& \\
\rho_{15}=w_1v_2-v_1w_2;&\rho_{16}=z_1w_2-z_2w_1 .&\\
\end{array}
\end{equation}
\end{lemma}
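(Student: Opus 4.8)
The plan is to pass to complex coordinates, where the diagonal action of $G$ becomes a torus action whose invariants are transparent. Identifying each $\mathbb R^2$-factor with $\mathbb C$, I would introduce $a=u_1+iu_2$, $b=v_1+iv_2$, $c=z_1+iz_2$, $d=w_1+iw_2$ together with the conjugate variables $\bar a,\bar b,\bar c,\bar d$; since $u_1=(a+\bar a)/2$, $u_2=(a-\bar a)/(2i)$, and so on, this is an invertible $\mathbb C$-linear change of coordinates, so that $\mathbb C[u_1,\dots,w_2]=\mathbb C[a,\bar a,\dots,d,\bar d]$ with the eight new symbols treated as independent variables. A rotation by $\theta$ now acts by $a\mapsto e^{i\theta}a$, $\bar a\mapsto e^{-i\theta}\bar a$, and likewise for the other pairs, so every holomorphic variable carries weight $+1$ and every antiholomorphic variable weight $-1$. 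Decomposing a polynomial into its weight components, one sees that it is $G$-invariant exactly when it is homogeneous of weight $0$, equivalently invariant under the complexified torus $\mathbb C^{*}$ acting by $t\cdot a=ta$, $t\cdot\bar a=t^{-1}\bar a$.

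The invariant monomials are then precisely those whose total degree in $a,b,c,d$ matches their total degree in $\bar a,\bar b,\bar c,\bar d$. The \emph{key step} is to show that the monoid of their exponent vectors is generated by the sixteen minimal elements $\xi\bar\eta$ with $\xi\in\{a,b,c,d\}$ and $\bar\eta\in\{\bar a,\bar b,\bar c,\bar d\}$. This is a bipartite-matching argument: an invariant monomial has some number $n$ of holomorphic factors and the same number of antiholomorphic factors, and one may repeatedly pair off one factor of each type, thereby writing it as a product of $n$ elementary invariants $\xi\bar\eta$. Consequently the sixteen products $\xi\bar\eta$ generate $\mathbb C[a,\bar a,\dots,d,\bar d]^{G}$ as a $\mathbb C$-algebra.

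Next I would match these complex generators with the listed real invariants. The four diagonal products are the squared norms $a\bar a=\rho_1$, $b\bar b=\rho_2$, $c\bar c=\rho_3$, $d\bar d=\rho_4$, while each off-diagonal product splits into a real part equal to an inner product and an imaginary part equal to a determinant; for example $a\bar b=(u_1v_1+u_2v_2)+i(u_2v_1-u_1v_2)=\rho_5+i\rho_{11}$, with the remaining pairs behaving analogously. Since $\xi\bar\eta$ and $\bar\xi\eta=\overline{\xi\bar\eta}$ together recover both parts as $\mathbb C$-linear combinations, the $\mathbb C$-span of the $\rho_i$ coincides with that of the $\xi\bar\eta$; hence the sixteen $\rho_i$ also generate the full complex invariant algebra $\mathbb C[u_1,\dots,w_2]^{G}$.

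Finally I would descend from $\mathbb C$ to $\mathbb R$. Given a real invariant $f\in\mathbb R[u_1,\dots,w_2]^{G}$, the previous step gives $f=P(\rho_1,\dots,\rho_{16})$ for some $P$ with complex coefficients; writing $P=P_1+iP_2$ with $P_1,P_2$ real and using that each $\rho_i$ is a real polynomial, the reality of $f$ forces $P_2(\rho_1,\dots,\rho_{16})=0$, so that $f=P_1(\rho_1,\dots,\rho_{16})\in\mathbb R[\rho_1,\dots,\rho_{16}]$. The main obstacle is the monoid-generation step in the second paragraph, since that is where the only genuine invariant-theoretic content enters; alternatively one could cite the first fundamental theorem for $SO(2)$, to the effect that the invariants of several planar vectors are generated by their pairwise inner products and $2\times2$ determinants, which yields the same sixteen generators directly.
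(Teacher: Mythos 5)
Your proposal is correct and follows essentially the same route as the paper's sketch: diagonalizing the rotation action (your complex coordinates $a,\bar a,\dots$ are exactly the eigencoordinates with weights $e^{\pm i\theta}$), observing that the invariants are generated by the quadratic monomials $\xi\bar\eta$, and recovering the real generators $\rho_i$ as real and imaginary parts. The only difference is that you prove the quadratic-generation step (the pairing-off argument for weight-zero monomials) and the descent from $\mathbb{C}$ to $\mathbb{R}$ explicitly, where the paper simply cites Examples 1 and 4 of the Schroeders--Walcher reference; both of your added steps are sound.
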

\begin{proof}[Sketch of proof]
Diagonalizing $\widetilde R$ one obtains a diagonal matrix with entries $a:=\exp(i\theta)$ and $a^{-1}$. In eigencoordinates a generator system of the invariant algebra is given by quadratic monomials; see \cite{SchrWa}, Example 1 and Example 4 for the explicit expressions. There remains to take real and imaginary parts.
\end{proof}
Note that all the invariants in Lemma \ref{rholem} are expressible as scalar products or determinants; for instance $\rho_7=\left<u,w\right>$ and $\rho_{11}=\det(v,u)$. \\
One should emphasize that this is a smallest set of polynomial generators (which satisfy a number of relations not listed here). By Proposition \ref{classred} one may reduce system \eqref{eqmotion} via the Hilbert map constructed from the $\rho_i$; this is indeed a reduction to a seven dimensional subvariety of $\mathbb R^{16}$ and there is an induced Poisson bracket on this variety for which the reduced system is Hamiltonian (see \cite{SSW}, in particular Prop.~A.5 in the Appendix). Since practical computations with this reduction do not seem feasible, we take a different approach by Proposition \ref{locred} here.
\begin{lemma}\label{etalem}
With the subset of the generator system defined by
\begin{equation}
\eta_1:=\rho_1;\,\eta_2:=\rho_5;\,\eta_3:=\rho_{11};\, \eta_4:=\rho_6;\,\eta_5:=\rho_{12};\eta_6:=\rho_7;\,\eta_7:=\rho_{13}
\end{equation}
the following relations hold:
\[
\begin{array}{ccc}
\rho_2=(\eta_2^2+\eta_3^2)/\eta_1;&\rho_3=(\eta_4^2+\eta_5^2)/\eta_1;&\rho_4=(\eta_6^2+\eta_7^2)/\eta_1;\\
\rho_8=(\eta_2\eta_4+\eta_3\eta_5)/\eta_1;&\rho_9=((\eta_2\eta_6+\eta_3\eta_7)/\eta_1;&\rho_{10}=(\eta_4\eta_6+\eta_5\eta_7)/\eta_1;\\
\rho_{14}=(\eta_2\eta_5-\eta_3\eta_4)/\eta_1;&\rho_{15}=(\eta_2\eta_7-\eta_3\eta_6)/\eta_1;&\rho_{16}=(\eta_4\eta_7-\eta_5\eta_6)/\eta_1.
\end{array}
\]
Therefore any polynomial invariant of $G$ may be expressed as a rational function in $\eta_1,\ldots,\eta_7$ with only powers of $\eta_1$ occurring in the denominator.
\end{lemma}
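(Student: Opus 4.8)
The plan is to observe that every generator listed in Lemma~\ref{rholem} is, up to sign, either a Euclidean scalar product $\langle a,b\rangle$ or a $2\times2$ determinant $\det(a,b)$ of two of the four planar vectors $u,v,z,w$, and that in particular $\eta_1=\langle u,u\rangle$. Once this is noticed, all nine displayed relations collapse to repeated use of a single two-dimensional linear-algebra identity, and the concluding assertion follows formally from Lemma~\ref{rholem}.

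The tool I would record first is the orthogonal decomposition underlying everything. For $u\neq0$ the pair $\{u,u^\perp\}$, with $u^\perp:=(-u_2,u_1)$, is an orthogonal basis of $\mathbb R^2$, and one has $\langle u^\perp,b\rangle=\det(u,b)$, $\langle u^\perp,u^\perp\rangle=\langle u,u\rangle$ and $\det(u,u^\perp)=\langle u,u\rangle$. Expanding arbitrary $b,c$ in this basis then yields the two companion identities
\[
\langle b,c\rangle=\frac{\langle u,b\rangle\langle u,c\rangle+\det(u,b)\det(u,c)}{\langle u,u\rangle},\qquad
\det(b,c)=\frac{\langle u,b\rangle\det(u,c)-\det(u,b)\langle u,c\rangle}{\langle u,u\rangle}.
\]
A compact way to get both at once is to identify $\mathbb R^2\cong\mathbb C$, so that $\overline b\,c=\langle b,c\rangle+i\det(b,c)$; the elementary identity $\overline b\,c=\overline{(\overline u\,b)}\,(\overline u\,c)/\langle u,u\rangle$ then produces the two formulas above by separating real and imaginary parts. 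This is precisely the passage to complex (eigen-)coordinates used in the proof of Lemma~\ref{rholem}.

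Next I would specialize $b,c\in\{v,z,w\}$. The three ``diagonal'' relations for $\rho_2,\rho_3,\rho_4$ are the case $b=c$ of the first identity, i.e.\ $\langle u,b\rangle^2+\det(u,b)^2=\langle u,u\rangle\langle b,b\rangle$; the mixed scalar products $\rho_8,\rho_9,\rho_{10}$ are the remaining instances of the first identity; and the determinants $\rho_{14},\rho_{15},\rho_{16}$ come from the second. In each case one substitutes $\langle u,v\rangle=\eta_2$, $\langle u,z\rangle=\eta_4$, $\langle u,w\rangle=\eta_6$ together with $\det(u,v)=-\eta_3$, $\det(u,z)=-\eta_5$, $\det(u,w)=-\eta_7$, and reads off the stated expressions. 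The final claim is then immediate: the seven $\eta_j$ are themselves generators, the nine relations express each remaining generator as a polynomial in $\eta_1,\dots,\eta_7$ over a power of $\eta_1$, and since by Lemma~\ref{rholem} every polynomial invariant is a polynomial in $\rho_1,\dots,\rho_{16}$, substitution yields a rational function in $\eta_1,\dots,\eta_7$ whose only denominators are powers of $\eta_1$.

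I expect the argument to be routine once the two identities are in hand; the one genuinely error-prone step is the sign bookkeeping, because the determinantal generators $\rho_{11}$–$\rho_{16}$ are \emph{not} defined with a uniform ordering of their two arguments (for instance $\rho_{11}=\det(v,u)$ places $u$ in the second slot, while $\rho_{16}=\det(z,w)$ does not). One must therefore keep careful track of which slot $u$ occupies when matching each formula to its definition, which is exactly where the signs $\det(u,\cdot)=-\eta_{2k+1}$ enter.
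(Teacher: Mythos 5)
Your method is sound, and in fact it supplies more than the paper does: the paper's entire ``proof'' of Lemma~\ref{etalem} is the single sentence that the verification is straightforward, so there is no detailed argument to compare against. Your two identities (equivalently the complex identity $\overline b\,c=\overline{(\overline u\,b)}\,(\overline u\,c)/\langle u,u\rangle$) are correct polynomial identities after clearing the denominator $\langle u,u\rangle$, they match the spirit of the paper's eigencoordinate sketch for Lemma~\ref{rholem}, and specializing $b,c\in\{v,z,w\}$ does reproduce the printed relations for $\rho_2,\rho_3,\rho_4,\rho_8,\rho_9,\rho_{10},\rho_{14},\rho_{15}$. The concluding assertion of the lemma then follows formally from Lemma~\ref{rholem} by substitution, exactly as you say.

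However, your claim that one ``reads off the stated expressions'' fails in precisely the one place you flagged as error-prone, and you did not carry the bookkeeping through to see it. With the paper's definition $\rho_{16}=z_1w_2-z_2w_1=\det(z,w)$, your second identity with $b=z$, $c=w$ gives
\[
\rho_{16}=\frac{\langle u,z\rangle\det(u,w)-\det(u,z)\langle u,w\rangle}{\langle u,u\rangle}
=\frac{\eta_4(-\eta_7)-(-\eta_5)\eta_6}{\eta_1}=\frac{\eta_5\eta_6-\eta_4\eta_7}{\eta_1},
\]
which is the \emph{negative} of the printed relation $\rho_{16}=(\eta_4\eta_7-\eta_5\eta_6)/\eta_1$. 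A concrete check: $u=(1,0)$, $z=(0,1)$, $w=(1,1)$ gives $\rho_{16}=-1$, while $(\eta_4\eta_7-\eta_5\eta_6)/\eta_1=+1$. So the lemma as printed contains a sign error: either $\rho_{16}$ should have been defined as $\det(w,z)$, matching the ordering pattern of $\rho_{11},\ldots,\rho_{15}$, or the last displayed relation should read $(\eta_5\eta_6-\eta_4\eta_7)/\eta_1$. (The paper's later expression for the angular momentum $j$ in Proposition~\ref{redpropo} is consistent with the corrected sign, so the slip is confined to this one display.) A complete proof must note this discrepancy rather than assert that all nine printed relations follow; the final conclusion of the lemma, that every polynomial invariant is rational in $\eta_1,\ldots,\eta_7$ with only powers of $\eta_1$ in the denominator, is of course unaffected.
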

The verification of this lemma is straightforward. However, we emphasize that the choice of $\eta_1,\ldots,\eta_7$ is not a matter of chance but naturally follows from the theory developed in \cite{SchrWa}, Theorem 1 and Example 4. There are several possible localizations; the one in Lemma \ref{etalem} was chosen in view of the application to the barbell.
%%%%%%%%%%%%%%%%%%%%%%%%%%%%%%%%%%%%%%%%%%%%%%%%%%%%%%%%%%%%%%%%%%%%%%%%%%%%%%%%%%
\subsection{Reduction}
Considering Lemma \ref{etalem}, Proposition \ref{locred} and Remark \ref{locrem} it is natural to introduce the Hilbert map
\begin{equation}
E:\, \mathbb R^8\to\mathbb R^7,\quad\begin{pmatrix} u\\v\\z\\w\end{pmatrix}\mapsto\begin{pmatrix}\eta_1(u,v,z,w)\\ \vdots \\ \eta_7(u,v,z.w)\end{pmatrix}.
\end{equation}
We denote the coordinates in $\mathbb R^7$ by $s_1,\ldots, s_7$.
 The actual reduction is as follows (no denominator occurs due to $\eta_1=1$ on the constraint manifold).
\begin{proposition}\label{redpropo}
\begin{enumerate}[(a)]
\item The map $E$ sends solutions of system \eqref{eqmotion} to solutions of an equation in $\mathbb R^7$ with $\dot s_1=0$, $\dot s_2=0$, and a remaining five dimensional  system
\begin{equation}\label{reqmotion}
\begin{array}{rcl}
\dot s_3&=& -2\left(U^\prime((1+s_4)^2+s_5^2)-U^\prime(s_4^2+s_5^2)\right)\cdot s_5\\
\dot s_4&=& s_3s_5+s_6\\
\dot s_5&=&-s_3s_4+s_7\\
\dot s_6&=& -\frac{2m_1}{m_1+m_2}\cdot U^\prime\left((1+s_4)^2+s_5^2\right)\cdot (1+s_4)\\ & &-\frac{2m_2}{m_1+m_2}\cdot U^\prime(s_4^2+s_5^2)\cdot s_4
  +\frac{m_1}{m_1+ m_2}s_3^2+s_3s_7\\
\dot s_7&=& -2U^\prime(s_4^2+s_5^2)\cdot s_5 -s_3s_6
\end{array}
\end{equation}
We will refer to \eqref{reqmotion} as {\em the reduced system}.
\item System \eqref{reqmotion} is Hamiltonian with respect to an induced Poisson bracket $\{\cdot,\,\cdot\}^\prime$. Its structure matrix
\[
\begin{pmatrix} \{\cdot,\,\cdot\}^\prime&| & s_3& s_4&s_5&s_6&s_7\\
                                                           -   & -  &- &-&-&-&-\\
s_3&|& 0 &-\frac{s_5}{2M}& \frac1{m_2}+\frac{s_4}{2M}&-\frac{s_3}{m_2}-\frac{s_7}{2M}& \frac{s_6}{2M}\\
s_4&|& -\frac{s_5}{2M} & 0 & 0 &\frac{1}{m_1+m_2}& -\frac{s_5}{m_2}\\
s_5&|&-\frac1{m_2}-\frac{s_4}{2M}&0&0&0& \frac{1+s_4}{m_2}\\
s_6&| & \frac{s_3}{m_2}+\frac{s_7}{2M}&-\frac{1}{m_1+m_2}&0&0&-\frac{2Ms_3}{m_2^2}\\
s_7&|&  -\frac{s_6}{2M}&\frac{s_5}{m_2}&-\frac{1+s_4}{m_2}& \frac{2Ms_3}{m_2^2} & 0
\end{pmatrix}
\]
has constant rank $4$. The reduced Hamiltonian function is
\[
\begin{array}{rcl}
h&:=&\frac{m_1}2\left(s_3^2+2s_3s_7+s_6^2+s_7^2\right)+\frac{m_2}2\left(s_6^2+s_7^2\right)\\
   & & \quad m_1U\left((1+s_4)^2+s_5^2\right)+m_2U\left(s_4^2+s_5^2\right).
\end{array}
\]
\item System \eqref{reqmotion} admits the first integral
\[
j:=m_1\left(s_3+s_7+s_3s_4\right) + (m_1+m_2)\left(s_4s_7-s_5s_6\right).
\]
\end{enumerate}
\end{proposition}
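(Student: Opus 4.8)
The three assertions all rest on the reduction principle recalled in Section 2: the Lie derivative of a $G$-invariant along the $G$-symmetric field $F$ of \eqref{eqmotion} is again $G$-invariant, and every invariant occurring here reduces to a function of $\eta_1,\dots,\eta_7$ by Lemma \ref{etalem}. For part (a) I would compute $\dot s_i = L_F(\eta_i)$ one at a time. Writing $s_3=\det(v,u)$, $s_4=\langle u,z\rangle$, $s_5=\det(z,u)$, $s_6=\langle u,w\rangle$, $s_7=\det(w,u)$, I differentiate each bilinear form, insert \eqref{eqmotion} for $\dot u,\dot v,\dot z,\dot w$, and collect terms. Every quadratic invariant that appears (for instance $\langle v,z\rangle$, $\det(z,v)$, $\langle v,w\rangle$, $\det(w,v)$) is rewritten through Lemma \ref{etalem}; the decisive simplification is that on the constraint manifold $\eta_1=\langle u,u\rangle=1$ and $\eta_2=\langle u,v\rangle=0$, so all denominators disappear and one lands exactly on the polynomial right-hand sides of \eqref{reqmotion}. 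The potential contributions are packaged by the constraint identities $\langle u+z,u+z\rangle=(1+s_4)^2+s_5^2$, $\langle z,z\rangle=s_4^2+s_5^2$ and the resulting form $\widetilde A=s_3^2-2U'((1+s_4)^2+s_5^2)(1+s_4)+2U'(s_4^2+s_5^2)s_4$ of \eqref{Adef1}. Finally $\dot s_1=2s_2$ and $\dot s_2=\widetilde A(1-s_1)$ both vanish on $\{s_1=1,\ s_2=0\}$, which is therefore invariant and is precisely the image of the constraint manifold under $E$.

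For part (b) I would invoke the reduction theorem for symmetric Hamiltonian systems (\cite{SSW}, Prop.~A.5; cf.~\cite{CuBa}): since $H$ and the constraints $c_1,c_2$ are $G$-invariant and the bracket \eqref{PBDir} is $G$-compatible, $\{\cdot,\cdot\}^*$ descends to a Poisson bracket $\{\cdot,\cdot\}'$ determined by $\{s_i,s_j\}'\circ E=\{\eta_i,\eta_j\}^*$, with reduced Hamiltonian $H$ re-expressed in the $s_i$, which the substitutions of (a) turn into $h$. The substance is then the evaluation of the ten entries $\{\eta_i,\eta_j\}^*$, $3\le i<j\le 7$, from \eqref{PBDir}. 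In the $(u,v,z,w)$ coordinates the nonzero elementary brackets are $\{u_i,v_j\}=\tfrac{1}{2M}\delta_{ij}$, $\{u_i,w_j\}=-\tfrac{1}{m_2}\delta_{ij}$, $\{v_i,z_j\}=\tfrac{1}{m_2}\delta_{ij}$, $\{z_i,w_j\}=\tfrac{1}{m_2}\delta_{ij}$; and since $c_1=\eta_1-1$ and $c_2=\eta_2$ are themselves invariants, the Dirac corrections $\{\eta_i,c_1\}$, $\{\eta_i,c_2\}$ are invariants as well and reduce through Lemma \ref{etalem} (several of them in fact vanishing on the constraint manifold). Restricting each result to $s_1=1$, $s_2=0$ produces the displayed structure matrix $B$, in which the two denominators $2M$ and $m_2$ merely record the two elementary-bracket scales. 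I would close (b) by checking the consistency relation that the Hamiltonian vector field $\{\cdot,h\}'=B\,\nabla h$ reproduces the right-hand side of \eqref{reqmotion}, which the general theorem also guarantees a priori.

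For part (c) the quickest route is the direct verification $\dot j=\nabla j\cdot\dot s=0$ along \eqref{reqmotion}, in which the potential terms cancel in pairs and the remaining cubic terms telescope. Conceptually, $j$ is, up to sign, the total angular momentum $m_1\det(x,y)+m_2\det(z,w)$ written in invariants, i.e. the momentum map of the abelian group $G$; being $G$-invariant and conserved along the flow of $H$, its reduction is not merely a first integral but a Casimir of $\{\cdot,\cdot\}'$, and this is what drives the rank statement. To make the rank precise I would verify $B\,\nabla j=0$, so that $\nabla j$ lies in the kernel at every point; as a $5\times5$ skew matrix has rank $0$, $2$ or $4$, it only remains to exclude rank $\le 2$. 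The cleanest way is to compute the five $4\times4$ sub-Pfaffians of $B$ and observe that they form a vector equal to a nonzero constant multiple of $\nabla j$. Since $\nabla j$ never vanishes — $\partial_{s_5}j=\partial_{s_6}j=0$ forces $s_5=s_6=0$, then $\partial_{s_3}j=0$ forces $s_4=-1$, whence $\partial_{s_7}j=-m_2\neq0$ — this Pfaffian vector is nowhere zero, so $B$ has constant rank $4$, with one-dimensional kernel spanned by $\nabla j$.

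The main obstacle I anticipate is the bookkeeping in part (b): producing the ten Dirac brackets in the non-canonical $(u,v,z,w)$ coordinates, tracking the cross-terms and the two correction terms of \eqref{PBDir}, and reducing each expression to a function of $s_3,\dots,s_7$ via Lemma \ref{etalem} and the constraint relations. No single step is deep, but the sheer volume of careful algebra, together with the requirement to reproduce the displayed matrix exactly (including the clean split between the $1/(2M)$ and $1/m_2$ contributions), makes this the step most likely to harbour sign or arithmetic errors; the Jacobi identity for $\{\cdot,\cdot\}'$, by contrast, comes for free from the general reduction theorem.
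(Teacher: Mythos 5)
Your proposal is correct, and on part (a) and the existence half of part (b) it is essentially the paper's own argument: compute the brackets $\{\eta_i,H\}^*$ (equivalently, the Lie derivatives along \eqref{eqmotion}), rewrite everything through Lemma \ref{etalem} with $\eta_1=1$, $\eta_2=0$ on the constraint manifold, and obtain the induced bracket from Proposition A.5 of \cite{SSW} together with Zariski density of the image of $E$ (the paper states this density hypothesis explicitly; keep it, since it is what makes the citation legitimate). You genuinely diverge in two places, both defensibly. For the rank claim the paper merely exhibits two $4\times4$ minors: the lower-right one is a positive multiple of $(1+s_4)^2$, and on the exceptional locus $s_4=-1$ the upper-left one is a nonzero constant, so the rank is $4$ everywhere. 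Your route proves more: that $j$ is a Casimir (the paper only asserts it is a first integral) and that the signed sub-Pfaffian vector is a constant nonzero multiple of $\nabla j$ (with the usual alternating-sign convention the factor is $-1/(m_1m_2(m_1+m_2))$), hence $\ker B=\mathbb{R}\,\nabla j$ at every point; I checked that this computation does go through, and your nonvanishing argument for $\nabla j$ is correct. This costs more algebra but unifies (b) and (c). For part (c) the paper argues conceptually: $J$ is the Hamiltonian of the infinitesimal generator of $G$, hence a first integral of \eqref{eqmotion}, and then $\{j,h\}^\prime\circ E=\{J,H\}^*=0$; your primary route is the direct verification $\nabla j\cdot\dot s=0$ along \eqref{reqmotion}, which works exactly with the cancellation pattern you predict, and your parenthetical momentum-map remark is precisely the paper's proof, so you have both routes available.

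One practical warning about your consistency checks: the structure matrix as printed in the paper contains typos. The $(s_4,s_3)$ entry must be $+s_5/(2M)$ (by skew-symmetry and the paper's own computation $\{s_3,s_4\}^\prime=-s_5/(2M)$), and the $(s_6,s_7)$, $(s_7,s_6)$ entries must read $\mp\left(2Ms_3/m_2^2+s_7/m_2\right)$. With the entries exactly as displayed, $B\nabla h$ does not reproduce the $s_6$ and $s_7$ rows of \eqref{reqmotion}, and $B\nabla j=0$ fails as well; with the corrected entries both identities hold. So expect your checks to fail against the printed matrix and succeed against the corrected one --- which is an argument for including them, not against.
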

\begin{proof} We just sketch some  arguments of the proof, omitting straightforward (but lengthy) calculations. To prove part (a), use \eqref{qchange} and Proposition \ref{locred}, and re-express any invariant polynomial via $\eta_1,\ldots,\eta_7$ by Lemma \ref{etalem}. For instance, one computes
\[
\begin{array}{rcl}
\{\eta_4,H\}^*&=&u_1\{z_1,H\}^*+z_1\{u_1,H\}^*+u_2\{z_2,H\}^*+z_2\{u_2,H\}^*\\
               &=&u_1w_1+z_1v_1+u_2w_2+z_2v_2\\
               &=& \rho_7+\rho_8=\eta_6+\eta_2\eta_4+\eta_3\eta_5;
\end{array}
\]
and then recalls that $\eta_2=0$ on the constraint manifold.\\
The proof of part (b) is based on the existence of an induced Poisson bracket which is characterized by the identity
\[
\{f\circ E,\,g\circ E\}^*=\{f,g\}^\prime\circ E
\]
for polynomial functions on $\mathbb R^5$ (see Proposition A5 and its proof in \cite{SSW}; the argument also applies to the given situation, since the image of $E$ -- by the rank of its Jacobian --  is Zariski dense in $\mathbb R^7$.) The rest follows from straightforward computations again; for instance the equality
\[
\begin{array}{rcl}
\{\eta_3,\eta_4\}^*&=& \{\rho_{11},\rho_6\}^*\\
  &=& -\rho_{12}/(2M)= -\eta_5/(2M)
\end{array}
\]
implies that
\[
\{s_3,s_4\}^\prime=-s_5/(2M).
\]
The lower right $4\times 4$ minor of the structure matrix is equal to 
\[
(1+s_4)^2/(m_1^2(m_1+m_2)^2),
\]
 and when $s_4=-1$ then the upper left $4\times 4$ minor equals 
$1/(m_1^2(m_1+m_2)^2)$; therefore the structure matrix has constant rank four.
The reduced Hamiltonian $h$ is obtained by rewriting $H$ as a function of the $\eta_i$.\\
Finally, part (c) is a consequence of the $G$-symmetry of \eqref{eqmotion}: The infinitesimal generator of $G$ is 
\[
{\rm diag}(B,B,B,B); \quad B:=\begin{pmatrix}0&-1\\1&0\end{pmatrix}
\]
and the corresponding vector field (which commutes with the right hand side of \eqref{eqmotion}) is Hamiltonian, with the angular momentum
\[
\begin{array}{rcl}
J&=&m_1\left(y_1x_2-x_1y_2\right)+m_2\left(w_1z_2-z_1w_2\right)\\
  &=&m_1\left(\eta_3+\eta_7+\eta_3\eta_4\right)+(m_1+m_2)\left(\eta_4\eta_7-\eta_5\eta_6\right)
\end{array}
\]
as Hamilton function. Therefore $J$ is also a first integral of \eqref{eqmotion}.
Defining $j$ by $j\circ E=J$, one has
\[
\{j,h\}^\prime\circ E=\{J,H\}^*=0
\]
by \cite{SSW}, and the assertion follows.
\end{proof}

\begin{remark}{\em The first integral $j$ yields a further reduction of \eqref{reqmotion} to any level set $j=j_0={\rm const.}$, thus (generically) to a hypersurface in $\mathbb R^5$. By a further transformation, this system can be embedded in $\mathbb R^4$: Defining 
\[
\widetilde s_3:=s_3+\frac{m_1+m_2}{m_1}s_7,
\]
one sees that
\[
j=\widetilde s_3-\frac{m_2}{m_1}s_7+m_1\widetilde s_3s_4-(m_1+m_2)s_5s_6
\]
and therefore
\[
\frac{m_2}{m_1}s_7=\widetilde s_3+m_1\widetilde s_3s_4-(m_1+m_2)s_5s_6-j_0
\]
on the level set. This yields a differential equation system for $\widetilde s_3,\,s_4,s_5$ and $s_6$. We will not use this further reduction in most of the present paper, but it turns out to be useful in subsection \ref{subseqharm} below.\\
The underlying reason for the introduction of $\widetilde s_3$ becomes transparent from carrying out a ``completion of squares'' for the quadratic form 
\[
m_1 s_3s_4 + (m_1+m_2)\left(s_4s_7-s_5s_6\right)
\]
occurring in $j$.}
\end{remark}
We close this subsection with some remarks on the image of the constraint manifold under
\[
\widetilde E:\,\begin{pmatrix}u\\v\\z\\w\end{pmatrix}\mapsto\begin{pmatrix}\eta_3\\ \vdots \\ \eta_7\end{pmatrix}.
\]
By general arguments, this is a semialgebraic subset of $\mathbb R^5$ (Tarski-Seidenberg) which contains a nonempty open subset of $\mathbb R^5$ (by the generic rank of the Jacobian). One can use part of Procesi and Schwarz \cite{ProcesiSchwarz} to find inequalities which must be satisfied by the image: The matrix
\[
M(x):=DE(x)\cdot \left(DE(x)\right)^{\rm tr}
\]
has $G$-invariant entries (which can be written as polynomials in $\eta_3,\ldots,\eta_7$ on the constraint manifold). Moreover it is positive semidefinite by construction, hence its Hurwitz determinants are nonnegative. It is not clear, however, whether these inequalities also suffice; the proof in \cite{ProcesiSchwarz} is not directly applicable. Since we will not require precise information about the image in the following, we will not discuss this any further.
%%%%%%%%%%%%%%%%%%%%%%%%%%%%%%%%%%%%%%%%%%%%%%%%%%%%%%%%%%%%%%%%
\subsection{Constrained harmonic motion}\label{subseqharm}
As a simple (but nontrivial) application we consider the case of harmonic potential energy, thus
\[
U^\prime=:\gamma>0.
\]
We first note that the case of the unconstrained system of the two particles is then straightforward: All nonconstant solutions of the equation of motion are periodic, with period $2\pi/\sqrt\gamma$. But for the constrained system
the equations of motion are nonlinear, with reduced system 
\[
\begin{array}{rcl}
\dot s_3&=& 0\\
\dot s_4&=& s_3s_5+s_6\\
\dot s_5&=&-s_3s_4+s_7\\
\dot s_6&=& -\frac{2\gamma m_1}{m_1+m_2}\cdot (1+s_4)-\frac{2\gamma m_2}{m_1+m_2}\cdot s_4 +\frac{m_1}{m_1+ m_2}s_3^2+s_3s_7\\
\dot s_7&=& -2\gamma\cdot s_5 -s_3s_6
\end{array}
\]
The first equation shows that $\eta_3=\det(v,u)=:\sigma$ is constant in \eqref{eqmotion} (note the relation to $j$). Together with $\eta_2=\left<u,v\right>=0$ and $\left<u,u\right>=1$, this implies that
\[
v=\sigma\begin{pmatrix}u_2\\-u_1\end{pmatrix}.
\]
Upon substitution of $s_3=\sigma$ in the reduced equation, there remains the linear system
\begin{equation}\label{reqharm}
\frac{d}{dt}\begin{pmatrix}s_4\\s_5\\s_6\\s_7\end{pmatrix}=\begin{pmatrix}0&\sigma&1&0\\ -\sigma&0&0&1\\
-2\gamma&0&0&\sigma\\
0&-2\gamma&-\sigma&0\end{pmatrix}\begin{pmatrix}s_4\\s_5\\s_6\\s_7\end{pmatrix}+\begin{pmatrix}0\\0\\\frac{(\sigma^2-2\gamma)m_1}{m_1+m_2}\\0\end{pmatrix}.
\end{equation}
This is a two-degree-of-freedom Hamiltonian system, by Proposition \ref{redpropo}. The eigenvalues of the matrix, thus
\[
\pm i\cdot(\sigma \pm \sqrt{2\gamma})
\]
are pairwise distinct except for the cases $\sigma^2=2\gamma$ and $\sigma=0$, respectively. 
The matrix is invertible whenever $\sigma^2-2\gamma\not=0$. In this case, system \eqref{reqharm} admits the unique stationary point
\[
\frac{m_1}{m_1+m_2}\begin{pmatrix}1\\0\\0\\ \sigma\end{pmatrix}.
\]
The physical interpretation of the stationary point (a relative equilibrium of the original system \eqref{eqmotion}) is straightforward: We have 
\[
\eta_4=\left<u,z\right>=\frac{m_1}{m_1+m_2} \text{  and  } \eta_5= \det(z,u)=0
\]
from the first and second entry, hence
\[
z=\frac{m_1}{m_1+m_2}u,\quad x=u+z.
\]
The remaining two conditions then yield
\[
w=\frac{m_1}{m_1+m_2}v=\frac{\sigma m_1}{m_1+m_2}\begin{pmatrix}u_2\\-u_1\end{pmatrix}.
\]
Thus, the orientation of the barbell is radial (both particles are on a line through the origin), and it rotates around the center with constant angular velocity. (See also Remark \ref{s5rem} below.)\\
Let us next consider non-stationary solutions of \eqref{reqharm} in the non-exceptional cases with $\sigma\not=0$ and $\sigma^2-2\gamma\not=0$. 
As is well-known, the eigenvalue ratio
\[
\omega:=\frac{\sigma -\sqrt{2\gamma}}{\sigma +\sqrt{2\gamma}}
\]
determines the dynamics:  In the non-resonant cases ($\omega\not\in\mathbb Q$) every nonconstant solution of \eqref{reqharm} is quasiperiodic, i.e. dense on a two-dimensional torus. But for rational $\omega$ all solutions are periodic (with their trajectories homeomorphic to circles). Hence, arbitrarily small changes in $\sigma$ will change the qualitative behavior substantially.\\
There remain the exceptional cases. When $\sigma=0$ then the matrix is semisimple, and all nonconstant solutions are periodic with period $2\pi/\sqrt{2\gamma}$. Finally, when $\sigma^2-2\gamma=0$ then equation \eqref{reqharm} is homogeneous, with matrix of rank two. There is a two-dimensional subspace of stationary points, and every nonstationary solution is periodic with period $\pi/\sqrt{2\gamma}$. 
%%%%%%%%%%%%%%%%%%%%%%%%%%%%%%%%%%%%%%%%%%%%%%%%%%%%%%%%%%%%%%%%%%%%%%%%%%%%%%%%%
\section{Relative equilibria}
Subsection \ref{subseqharm} already gave an indication that the reduction  of system \eqref{eqmotion} via  Proposition \ref{redpropo} is convenient for actual computations, and we will further illustrate this fact in the discussion of relative equilibria.
Relative equilibria of system \eqref{eqmotion} are equilibria of the reduced differential equation \eqref{reqmotion}, thus they solve the system of nonlinear ``algebraic'' equations
\begin{equation}\label{requilib}
\begin{array}{rcl}
0&=& -2\left(U^\prime((1+s_4)^2+s_5^2)-U^\prime(s_4^2+s_5^2)\right)\cdot s_5\\
0&=& s_3s_5+s_6\\
0&=&-s_3s_4+s_7\\
0&=& -\frac{2m_1}{m_1+m_2}\cdot U^\prime\left((1+s_4)^2+s_5^2\right)\cdot (1+s_4)\\ & & -\frac{2m_2}{m_1+m_2}\cdot U^\prime(s_4^2+s_5^2)\cdot s_4
  +\frac{m_1}{m_1+ m_2}s_3^2+s_3s_7\\
0&=& -2U^\prime(s_4^2+s_5^2)\cdot s_5 -s_3s_6
\end{array}
\end{equation}
We will discuss these equilibria, assuming throughout that $U^\prime$ is not constant. The first equation of \eqref{requilib} gives rise to a natural distinction of cases. 
%%%%%%%%%%%%%%%%%%%%%%%%%%%%%%%%%%%%%
\subsection{First case: $s_5=0$}\label{subsecone}
\begin{proposition}\label{releqone}
The equilibria of \eqref{reqmotion} with $s_5=0$ and  $m_1+(m_1+m_2)s_4\not=0$ are characterized by the relations
\[
\begin{array}{rcl}
s_3^2&=&\frac{2m_1U^\prime\left((1+s_4)^2\right)(1+s_4)+2m_2U^\prime\left(s_4^2\right)s_4}{m_1+(m_1+m_2)s_4}\\
s_5&=&0\\
s_6&=&0\\
s_7&=&s_3s_4
\end{array}
\]
with $s_4$ running through all values such that the right hand side of the first equation is $\geq 0$.\\
Whenever $m_1\not=m_2$ and $U^\prime$ is strictly monotone then no equilibria with $s_5=0$ and  $m_1+(m_1+m_2)s_4=0$ exist.
\end{proposition}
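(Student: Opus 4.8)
The plan is to substitute the defining condition $s_5=0$ directly into the equilibrium system \eqref{requilib} and extract its consequences one equation at a time. First I would note that the first equation of \eqref{requilib} becomes $0=0$ identically, since $s_5$ occurs there as an overall factor. The second equation then reduces to $s_6=0$, and once $s_6=0$ is known the fifth equation also collapses to $0=0$ (both of its surviving terms carry a factor that now vanishes). The third equation gives $s_7=s_3s_4$. Hence the only substantive condition remaining among the five is the fourth equation.

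Next I would insert $s_5=0$, $s_6=0$ and $s_7=s_3s_4$ into that fourth equation. The two $s_3$-terms, namely $\tfrac{m_1}{m_1+m_2}s_3^2$ and $s_3s_7=s_3^2s_4$, combine into $s_3^2\cdot\tfrac{m_1+(m_1+m_2)s_4}{m_1+m_2}$, so after clearing the factor $m_1+m_2$ the fourth equation reads
\[
s_3^2\bigl(m_1+(m_1+m_2)s_4\bigr)=2m_1U^\prime((1+s_4)^2)(1+s_4)+2m_2U^\prime(s_4^2)s_4.
\]
Under the hypothesis $m_1+(m_1+m_2)s_4\neq0$ I would divide to obtain exactly the asserted formula for $s_3^2$; reality of $s_3$ then forces the right-hand side to be nonnegative, which is precisely the stated restriction on the range of $s_4$. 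Conversely every such $s_4$ yields a value $\pm s_3$ and hence an equilibrium, so the list of relations is a complete characterization.

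For the second assertion I would set $s_4=-m_1/(m_1+m_2)$, so that the coefficient $m_1+(m_1+m_2)s_4$ vanishes and the displayed identity loses its entire left-hand side, leaving the constraint
\[
m_1U^\prime\bigl((1+s_4)^2\bigr)(1+s_4)+m_2U^\prime\bigl(s_4^2\bigr)s_4=0.
\]
Here $1+s_4=m_2/(m_1+m_2)$, hence $(1+s_4)^2=m_2^2/(m_1+m_2)^2$ and $s_4^2=m_1^2/(m_1+m_2)^2$. Substituting these and factoring out the positive quantity $m_1m_2/(m_1+m_2)$ reduces the constraint to
\[
U^\prime\!\left(\frac{m_2^2}{(m_1+m_2)^2}\right)=U^\prime\!\left(\frac{m_1^2}{(m_1+m_2)^2}\right).
\]
The two arguments are distinct exactly when $m_1\neq m_2$, so if $U^\prime$ is strictly monotone this equality is impossible; the resulting contradiction shows that no equilibrium with $s_5=0$ and $m_1+(m_1+m_2)s_4=0$ can exist.

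The calculation is largely routine, so the only genuinely conceptual point is the degenerate case: one must recognize that the vanishing of $m_1+(m_1+m_2)s_4$ is precisely what eliminates the $s_3^2$ contribution and thereby collapses the equilibrium condition into a single equation in $U^\prime$ evaluated at the two squared radii $m_1^2/(m_1+m_2)^2$ and $m_2^2/(m_1+m_2)^2$ (geometrically, the configuration with the centre of mass at the force centre). I expect the main care to be needed in tracking the mass-weighted coefficients through the substitution and in confirming that the prefactor $m_1m_2/(m_1+m_2)$ is nonzero, so that the final dichotomy rests solely on the strict monotonicity of $U^\prime$ together with $m_1\neq m_2$.
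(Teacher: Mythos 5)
Your proposal is correct and follows essentially the same route as the paper's proof: substitute $s_5=0$ to get $s_6=0$ from the second equation and $s_7=s_3s_4$ from the third, reduce everything to the fourth equation in the form $s_3^2\bigl(m_1+(m_1+m_2)s_4\bigr)=2m_1U^\prime((1+s_4)^2)(1+s_4)+2m_2U^\prime(s_4^2)s_4$, and in the degenerate case $s_4=-m_1/(m_1+m_2)$ observe that the condition collapses to $U^\prime\bigl(m_2^2/(m_1+m_2)^2\bigr)=U^\prime\bigl(m_1^2/(m_1+m_2)^2\bigr)$, which strict monotonicity of $U^\prime$ rules out when $m_1\neq m_2$. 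Your write-up is in fact slightly more explicit than the paper's (noting that the first and fifth equations become vacuous, and that nonnegativity of the right-hand side is exactly the reality condition for $s_3$), but there is no substantive difference in method.
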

\begin{proof}This follows from a straightforward evaluation of \eqref{requilib}: With $s_5=0$, the second equation immediately implies that $s_6=0$ (hence the last equation is automatically satisfied). Then one may substitute $s_7=s_3s_4$ in the fourth equation.
The relation for $s_3^2$ is actually 
\[
(m_1+(m_1+m_2)s_4)s_3^2=2m_1U^\prime\left((1+s_4)^2\right)(1+s_4)+2m_2U^\prime\left(s_4^2\right)s_4
\]
which may be restated as above whenever $m_1+(m_1+m_2)s_4\not=0$. In case $s_4=-m_1/(m_1+m_2)$ there remains
\[
0=2\frac{m_1m_2}{m_1+m_2}\left(U^\prime((\frac{m_2}{m_1+m_2})^2)-U^\prime((\frac{m_1}{m_1+m_2})^2)\right)
\]
which has no solution whenever $m_1\not=m_2$ and $U^\prime$ is strictly monotone.
\end{proof}
We will discuss the additional equilibria in case $m_1=m_2$ below in subsection \ref{emsubec}.
\begin{remark}\label{s5rem}
{\em 
The condition $\eta_5=0$ admits a natural physical interpretation: Since 
\[
\eta_5=\det(z,u)
\]
one sees that $\eta_5=0$  if and only if $u$ and $z$ are parallel, thus the barbell is positioned radially, on a line through the center. Given a suitable initial state (with the further conditions on $\eta_3,\eta_4$ and $\eta_5$ also satisfied), the arrangement rotates around the center, since its path is restricted to a $G$-orbit. Moreover, with $\eta_4=\left<u,z\right>$ one has then
\[
z=\eta_4 u\text{  and  }x=(\eta_4+1)u.
\]
Thus, for $\eta_4>0$ both particles are on a straight line through the center and at the same side of the center, with the particle of mass $m_2$ closer to the center; for $\eta_4<-1$ both masses are on the same side, with the particle of mass $m_1$ closer to the center, and for $-1<\eta_4<0$ the particles are positioned at different sides of the center. (The borderline cases $\eta_4\in\{0,1\}$ describe the setting when one particle lies in the center; this may or may not be permissible, depending on the potential.)}
\end{remark}
Considering (linear) stability properties, it is possible to compute the linearization at a stationary point of \eqref{reqmotion} and its characteristic polynomial, but there seems to be little information to be gleaned from this for general potentials. The characteristic polynomial has a root $0$, due to the existence of the first integral $j$. (Recall that we did not specify a level set for $j$ above.) The remaining eigenvalues then determine the (linear) orbital stability properties of the relative equilibria of system \eqref{eqmotion}. We will consider only a special potential here, which already exhibits rather intricate behavior.
\begin{example}{\em We consider gravitation in two dimensions, thus
\[
U^\prime(r)=\frac1{r}
\]
after suitable scaling. The first condition in Proposition \ref{releqone} then becomes
\[
s_3^2=\frac{2(s_4(m_1+m_2)+m_2)}{(1+s_4)s_4(s_4(m_1+m_2)+m_1)},
\]
hence the right hand side must be defined and nonnegative; this provides restrictions on $s_4$.
\begin{itemize}
\item In case $m_1>m_2$ relative equilibria exist for
\[
s_4\in(-\infty,-1)\cup(\frac{-m_1}{m_1+m_2}, \frac{-m_2}{m_1+m_2}]\cup(0,\infty).
\]
\item In case $m_1=m_2$ relative equilibria exist for
\[
s_4\in(-\infty,-1)\cup(0,\infty).
\]
\item  In case $m_1<m_2$ relative equilibria exist for
\[
s_4\in(-\infty,-1)\cup[\frac{-m_2}{m_1+m_2}, \frac{-m_1}{m_1+m_2})\cup(0,\infty).
\]
\end{itemize}
The characteristic polynomial at such a stationary point is given by
\[
\chi(t)=t\cdot(t^4+C_1t^2+C_2).
\]
Here
\[
C_1=\frac{N_1}{s_4^2(m_1+m_2)(1+s_4)^2(s_4(m_1+m_2)+m_1)}
\]
with
\[
\begin{array}{rcl}
N_1&=&(8m_1^2+16m_1m_2+8m_2^2)s_4^3+(14m_1^2+24m_1m_2+10m_2^2)s_4^2\\
&& +(8m_1^2+8m_1m_2+4m_2^2)s_4+2m_1^2,
\end{array}
\]
and
\[
C_2=\frac{N_2}{s_4^4(1+s_4)^4(m_1+m_2)(s_4(m_1+m_2)+m_1)^2}
\]
with
\[
\begin{array}{rcl}
N_2&=&16(m_1+m_2)^3s_4^6+(56m_1^3+152m_1^2m_2+136m_1
m_2^2+40m_2^3)s_4^5\\
&&+(72m_1^3+160m_1^2m_2+120m_1m_2^2+32m_2^3)s_4^4\\
&&+(40m_1^3+56m_1^2m_2+24m_1m_2^2+
8m_2^3)s_4^3\\
&&+(8m_1^3-12m_1^2m_2-20m_1m_2^2)s_4^2+(-16m_1^2m_2-8m_1m_2^2)s_4-4m_1^2m_2.
\end{array}
\]
Therefore linear stability of an equilibrium with given $s_4$ is determined by the roots of the quadratic polynomial
\[
\widehat\chi(\tau)=\tau^2+C_1\tau+C_2.
\]
The discriminant of $\widehat\chi$ equals
\[
D=\frac{4D_1\cdot (s_4(m_1+m_2)+m_1)^2}{s_4^4(1+s_4)^4(m_1+m_2)(s_4(m_1+m_2)+m_1)^2}
\]
with
\[
D_1=(9m_1^2+14m_1m_2+9m_2^2)s_4^2+(6m_1^2+14m_1m_2+12m_2^2)s_4+m_1^2+4m_1m_2
+4m_2^2.
\]
Since, in turn, the discriminant of $D_1$ as a polynomial in $s_4$ is equal to
\[
-32(m_1+m_2)^2m_1m_2<0,
\]
one sees that $D_1$ and $D$ (when defined) are both $\geq 0$, hence all roots of $\widehat \chi$ are real, and their signs determine stability. We give a brief discussion.
\begin{itemize}
\item  $s_4\in(0,\infty)$: Since $N_2=-4m_1^2m_2$ when $s_4=0$, we have $C_2<0$ for small $s_4>0$, and the system is unstable for those values of $s_4$. On the other hand, the asymptotic behavior of $C_1$ and $C_2$ as $s_4\to\infty$ implies that $C_1>0$ and $C_2>0$ for sufficiently large $s_4$. Since also $D> 0$, both roots of $\widehat \chi$ are negative, and we have linear stability. (Numerical examples indicate that there occurs precisely one change from instability to stability as $s_4$ grows; but this seems not easy to prove in general.)
\item $s_4\in(-\infty,-1)$: Since $(1+s_4)^4C_2\to-4m_1/(m_1+m_2)$ as $s_4\to -1$, one sees that $\widehat\chi$ has  a positive root for $s_4<-1$ but close to $-1$. As $s_4\to-\infty$ one obtains the existence of two negative roots by the same resoning as above. Hence there occurs a change of the stability properties as in the first case.
\item In case $m_1>m_2$ and $s_4\in(\frac{-m_1}{m_1+m_2}, \frac{-m_2}{m_1+m_2}]$ one finds that $C_2=0$ when $s_4=-1/2$, and a Taylor expansion shows that the sign of $C_2$ changes from $+$ to $-$ whenever $m_2<m_1<(3+2\sqrt2)m_2$, and from $-$ to $+$ whenever $m_1>(3+2\sqrt2)m_2$. These facts show that a change of the stability properties takes place, and they indicate that stability properties depend in a quite subtle manner on mass ratios. Similar observations apply to the case $m_1<m_2$. We will not discuss further details here.
\end{itemize}
}
\end{example}
%%%%%%%%%%%%%%%%%%%%%%%%%%%%%%%%%%%%%%%%%%%%%%%%%%%%%%%%%%%
\subsection{Second case: $s_5\not=0$}
A full discussion of the necessary condition
\[
U^\prime((1+s_4)^2+s_5^2)=U^\prime(s_4^2+s_5^2)
\]
would be quite intricate, but focussing attention on strictly monotone $U^\prime$ (which is a reasonable restriction) yields rather general results.
\begin{proposition}\label{releqtwo}
Assume that $U^\prime$ is either strictly increasing or strictly decreasing. Then the following hold:
\begin{enumerate}[(a)]
\item The equilibria of \eqref{reqmotion} with $s_5\not=0$ are characterized by the relations
\[
\begin{array}{rcl}
s_3^2&=&2U^\prime\left(\frac14+s_5^2\right)\\
s_4&=&-\frac12\\
s_6&=&-s_3s_5\\
s_7&=&-\frac12s_3
\end{array}
\]
with $s_5$ running through all values such that $U^\prime(\frac14+s_5^2)\geq 0$. In particular, for a repelling force with $U^\prime <0$ no such equilibria exist.
\item The characteristic polynomial at such a stationary point has the form 
\[
\chi(t)=t\cdot\left(t^4+C_1t^2+C_2\right)
\]
with
\[
\begin{array}{rcl}
C_1&=&U^{\prime\prime}(\frac14+s_5^2)\cdot(8s_5^2+1)+8U^\prime(\frac14+s_5^2)\\
C_2&=&U^{\prime\prime}(\frac14+s_5^2)^2\cdot(16s_5^4+4s_5^2)+32U^{\prime\prime}(\frac14+s_5^2)\cdot U^{\prime}(\frac14+s_5^2)\cdot s_5^2.
\end{array}
\]
Its linear stability properties are therefore determined by the zeros of the degree $2$ polynomial
\[
\widehat\chi =\tau^2+C_1\tau+C_2
\]
which are real since the discriminant of $\widehat\chi$ equals
\[
D:=\left(U^{\prime\prime}(\frac14+s_5^2)+8U^{\prime}(\frac14+s_5^2)\right)^2\geq 0.
\]
\end{enumerate}
\end{proposition}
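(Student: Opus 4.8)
For part (a) the plan is to use strict monotonicity to make $U'$ injective. Under the standing hypothesis $s_5\neq0$, the first equation of \eqref{requilib} forces $U'((1+s_4)^2+s_5^2)=U'(s_4^2+s_5^2)$, hence $(1+s_4)^2=s_4^2$ and therefore $s_4=-\tfrac12$; for this value both potential arguments collapse to $r:=\tfrac14+s_5^2$. The remaining equations then solve in cascade: the second gives $s_6=-s_3s_5$, the third gives $s_7=s_3s_4=-\tfrac12 s_3$, and the fifth, after dividing by $s_5\neq0$, gives $s_3^2=2U'(r)$. It remains to check that the fourth equation is then automatic: substituting $s_4=-\tfrac12$, $s_7=-\tfrac12 s_3$ and $s_3^2=2U'(r)$, the potential contribution $\tfrac{m_2-m_1}{m_1+m_2}U'(r)$ combines with $\tfrac{m_1}{m_1+m_2}s_3^2+s_3s_7=\bigl(\tfrac{2m_1}{m_1+m_2}-1\bigr)U'(r)$ to give $0$, so no further constraint arises. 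The condition $s_3^2=2U'(r)\ge0$ then fixes the admissible range of $s_5$ and rules out equilibria when $U'<0$.

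For part (b) I would first pin down the shape of $\chi$ from two structural facts about the Jacobian $A:=DF$ of the right-hand side of \eqref{reqmotion} at the equilibrium from (a). First, every diagonal entry of $A$ vanishes, so $\operatorname{tr}A=0$. Second, since $j$ is a first integral, differentiating the identity $Dj\cdot F\equiv0$ and evaluating where $F=0$ shows that $Dj$ is a left null vector of $A$; as $\partial j/\partial s_3=m_1(1+s_4)=\tfrac{m_1}{2}\neq0$ this vector is nonzero, so $0$ is an eigenvalue and $\chi(t)=t\cdot q(t)$ with $\deg q=4$. To see that $q$ is even I would invoke Proposition \ref{redpropo}(b): the rank-four Poisson structure makes each level set $\{j=\mathrm{const}\}$ a symplectic leaf carrying a genuine two-degree-of-freedom Hamiltonian system (as already used in subsection \ref{subseqharm}). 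Because $Dj\cdot A=0$, the map $A$ preserves the tangent space $\ker Dj$ to the leaf, and the induced linearization there is infinitesimally symplectic, hence has spectrum symmetric under $\lambda\mapsto-\lambda$; thus $q(t)=(t^2-\lambda_1^2)(t^2-\lambda_2^2)=t^4+C_1t^2+C_2$.

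With this form fixed, $C_1$ and $C_2$ are the elementary symmetric functions $E_2$ and $E_4$ of the eigenvalues of $A$, i.e. the sums of its $2\times2$ and $4\times4$ principal minors. The $2\times2$ sum is short: since all diagonal entries vanish, each such minor equals $-A_{ij}A_{ji}$, and the only nonzero contributions come from the index pairs $(s_3,s_4),(s_4,s_5),(s_4,s_6),(s_5,s_7),(s_6,s_7)$; summing and inserting $s_3^2=2U'(r)$ gives $C_1=U''(r)(8s_5^2+1)+8U'(r)$. Here the two mass-dependent entries of $A$ sit in the $s_6$-row and are paired against vanishing $s_6$-column entries, so the mass ratios already drop out of $C_1$. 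For $C_2=E_4$ I would expand the five $4\times4$ principal minors and substitute $s_3^2=2U'(r)$ to obtain the stated $C_2$. The discriminant is then the one-line identity $D=C_1^2-4C_2=(U''(r)+8U'(r))^2\ge0$, so both roots of $\widehat\chi$ are real.

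The hard part will be the evaluation of $C_2=E_4$. Unlike $C_1$, it genuinely involves the two mass-dependent couplings of the $s_6$-row across several $4\times4$ determinants, and verifying that the $m_1,m_2$ dependence cancels in the sum — leaving the clean, mass-free expression claimed — is the one piece of bookkeeping that must be carried out with care. Everything else (the reduction to $s_4=-\tfrac12$, the zero eigenvalue, the evenness of $q$, the value of $C_1$, and the perfect-square discriminant) is either structural or a short algebraic identity.
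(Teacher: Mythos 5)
Your part (a) is correct and follows the paper's own route: strict monotonicity of $U^\prime$ forces $(1+s_4)^2+s_5^2=s_4^2+s_5^2$, hence $s_4=-\tfrac12$, and the second, third and fifth equations of \eqref{requilib} then yield $s_6=-s_3s_5$, $s_7=-\tfrac12 s_3$, $s_3^2=2U^\prime(\tfrac14+s_5^2)$. Your explicit check that the fourth equation then reduces to $\frac{m_2-m_1}{m_1+m_2}U^\prime+\bigl(\frac{2m_1}{m_1+m_2}-1\bigr)U^\prime=0$ is a consistency step the paper leaves implicit (``the remaining relations are straightforward''), so this part is, if anything, more complete than the published sketch.

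For part (b) you take a genuinely different route. The paper computes the characteristic polynomial of the explicit $5\times 5$ Jacobian head-on (the matrix is displayed in its proof), substituting $s_7=-s_3/2$ and then $s_3^2=2U^\prime$; the even, factored form of $\chi$ and the disappearance of $m_1,m_2$ simply fall out of that computation. You instead fix the shape $\chi(t)=t(t^4+C_1t^2+C_2)$ structurally and then compute only $C_1=E_2$ and $C_2=E_4$ as sums of principal minors. I checked that this works: the nonzero $2\times2$ contributions are exactly your five index pairs and give the stated $C_1$; and, writing $P=U^\prime$, $Q=U^{\prime\prime}$, $\alpha=m_1/(m_1+m_2)$, the five $4\times4$ principal minors evaluate (after $s_3^2=2P$) to $8\alpha Q^2s_5^2$, $0$, $16PQs_5^2$, $16Q^2s_5^4+16PQs_5^2$ and $4Q^2s_5^2(1-2\alpha)$, so the $\alpha$-dependence cancels between the minors obtained by deleting the $s_3$-row and the $s_7$-row, and $E_4$ is exactly the stated $C_2$. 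So the mass cancellation you flagged as the main risk does occur. Your approach buys conceptual insight (it explains a priori why the odd coefficients vanish), at the price of one piece of Poisson geometry that the paper's brute-force computation never needs.

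That piece is the one genuine gap in your write-up: your evenness argument identifies the level sets of $j$ with the symplectic leaves of $\{\cdot,\cdot\}^\prime$, i.e.\ it requires $j$ to be a \emph{Casimir}. Neither of your citations delivers this: Proposition \ref{redpropo}(c) only proves $\{j,h\}^\prime=0$ (a first integral), and subsection \ref{subseqharm} works on level sets of $s_3$, not of $j$. For a mere first integral, $\ker Dj$ carries no symplectic structure, and the restriction of $A$ to it need not be infinitesimally symplectic, so evenness would not follow. The claim is in fact true, and you can close the gap in either of two ways. (i) Prove the Casimir property by the same mechanism the paper uses for Proposition \ref{redpropo}(c): for \emph{any} polynomial $f$, $\{f,j\}^\prime\circ E=\{f\circ E,J\}^*=\{f\circ E,J\}=0$, because $c_1,c_2$ and $f\circ E$ are $G$-invariant while the canonical Hamiltonian vector field of $J$ generates $G$; Zariski density of the image of $E$ then gives $\{f,j\}^\prime=0$ identically. (Be warned that verifying $\Pi\,\nabla j=0$ directly from the structure matrix as printed in Proposition \ref{redpropo} will fail: that matrix is not antisymmetric as displayed, and the entry $\{s_6,s_7\}^\prime$ lacks a term $-s_7/m_2$; with the printed entries one does not even recover \eqref{reqmotion} from $\dot s_i=\{s_i,h\}^\prime$.) (ii) Bypass $j$ entirely: $A$ preserves the tangent space of the symplectic leaf through the equilibrium, namely the image of the structure matrix; the restriction there is infinitesimally symplectic, hence has even quartic characteristic polynomial, and the fifth eigenvalue is $0$ because $\operatorname{tr}A=0$ (all diagonal entries vanish). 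With either repair, your proof is complete and correct.
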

\begin{proof}By strict monotonicity of $U^\prime$, the equation
\[ U^\prime((1+s_4)^2+s_5^2)=U^\prime(s_4^2+s_5^2)\] is satisfied if and only if $(1+s_4)^2+s_5^2=s_4^2+s_5^2$; equivalently $s_4=-\frac12$. The remaining relations are straightforward (substituting $s_6=-s_3s_5$ in the last equation with $s_5\not=0$). This shows part (a).\\
Part (b) is the result of a calculation starting from the Jacobian (upon substitution of $s_4=-1/2$)
\[
\begin{pmatrix}0&-4U^{\prime\prime}(\frac14+s_5^2)s_5&0&0&0\\
s_5&0&s_3&1&0\\
\frac12&-s_3&0&0&1\\\frac{(m_1+m_2)s_7+2m_2s_3}{m_1+m_2}&-U^{\prime\prime}(\frac14+s_5^2)-2U^{\prime}(\frac14+s_5^2)&\frac{2s_5U^{\prime\prime}(\frac14+s_5^2)(m_2-m_1)}{m_1+m_2}&0&s_3\\
s_3s_5&2U^{\prime\prime}(\frac14+s_5^2)s_5&-4U^{\prime\prime}(\frac14+s_5^2)s_5^2-2U^{\prime}(\frac14+s_5^2)&-s_3&0
\end{pmatrix},
\]
further substituting $s_7=-s_3/2$, and finally (in the characteristic polynomial) replacing $s_3^2$ by $2U^\prime\left(\frac14+s_5^2\right)$.\\
The eigenvalue $0$ must occur due to the first integral $j$; its eigenspace is transversal to the level sets of $j$. This argument proves linear stability.
\end{proof}
\begin{remark} \label{s4rem}{\em 
The condition $\eta_4=-\frac12$ also admits a natural physical interpretation. Indeed, with $\eta_4=\left<u,z\right>$ and $\left<u,u\right>=1$ one sees
\[
\begin{array}{cccl}
 & \eta_4&=&-\frac12\\
\Leftrightarrow&\left<u,2z+u\right>&=&0\\
\Leftrightarrow&\left<x-z,x+z\right>&=&0\\
\Leftrightarrow&\left<x,x\right>&=&\left<z,z\right>\\
\end{array}
\]
Thus the relative equilibria of this type are distinguished by the property that both particles have the same distance from the center all the time, with the barbell rotating around the center.}
\end{remark}
Both zeros of $\widehat\chi$ are negative whenever $C_1>0$ and $C_2>0$. This implies:
\begin{corollary}
Whenever $s_5\not=0$, and $U^\prime$ and $U^{\prime\prime}$ are positive functions then all stationary points of \eqref{reqmotion} are linearly stable.
\end{corollary}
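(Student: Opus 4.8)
The plan is to read off the explicit formulas for $C_1$ and $C_2$ supplied by Proposition~\ref{releqtwo}(b) and verify that both are strictly positive under the hypotheses $U'>0$ and $U''>0$; the remark immediately preceding the corollary, that both roots of $\widehat\chi$ are negative once $C_1>0$ and $C_2>0$, then yields the claim. Abbreviating $a:=U'(\tfrac14+s_5^2)$ and $b:=U''(\tfrac14+s_5^2)$, the hypotheses say exactly that $a>0$ and $b>0$ (the argument $\tfrac14+s_5^2$ lies in $(0,\infty)$, so $U'$ and $U''$ are indeed evaluated where they are assumed positive). Proposition~\ref{releqtwo}(b) gives
\[
C_1=b\,(8s_5^2+1)+8a,\qquad C_2=b^2\,(16s_5^4+4s_5^2)+32\,a\,b\,s_5^2 .
\]

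First I would check $C_1>0$: the factor $8s_5^2+1$ is positive for every real $s_5$, so $b\,(8s_5^2+1)>0$, while $8a>0$; thus $C_1$ is a sum of positive terms. For $C_2$ I would invoke the standing assumption $s_5\neq0$, which gives $s_5^2>0$ and hence $16s_5^4+4s_5^2>0$; since $b^2>0$ and $ab\,s_5^2>0$, the quantity $C_2$ is again a sum of strictly positive terms. Therefore $C_1>0$ and $C_2>0$.

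It remains to turn this into the spectral conclusion. By Proposition~\ref{releqtwo}(b) the discriminant of $\widehat\chi(\tau)=\tau^2+C_1\tau+C_2$ equals $(b+8a)^2$, which is strictly positive here, so the two roots $\tau_1,\tau_2$ are real and distinct; their sum $-C_1$ is negative and their product $C_2$ is positive, forcing both to be negative. Consequently the four nonzero eigenvalues of the linearization, $t=\pm\sqrt{\tau_1}$ and $t=\pm\sqrt{\tau_2}$, are four distinct purely imaginary numbers, and together with the simple eigenvalue $0$ arising from the first integral $j$ (whose eigenspace is transversal to the level sets of $j$, as recorded in the proof of Proposition~\ref{releqtwo}) the linearization is semisimple with spectrum on the imaginary axis. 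This is the asserted linear stability.

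I do not anticipate a genuine obstacle: once Proposition~\ref{releqtwo} is in hand, the statement is essentially a sign count. The only point requiring care is the interpretation of ``linear stability'' in the presence of the forced zero eigenvalue; one must either restrict to a level set of $j$ or observe that this eigenvalue is simple and transversal, and then confirm that the surviving purely imaginary eigenvalues carry no nontrivial Jordan block. The strict positivity of the discriminant $(b+8a)^2$, which holds precisely because $a,b>0$, guarantees $\tau_1\neq\tau_2$ and thereby rules out such a block, so that spectral stability genuinely implies linear stability.
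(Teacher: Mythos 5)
Your proof is correct and takes essentially the same route as the paper: the paper likewise deduces the corollary from the positivity of $C_1$ and $C_2$ in Proposition~\ref{releqtwo}(b), which forces both (real) roots of $\widehat\chi$ to be negative, hence the nonzero eigenvalues of the linearization to be purely imaginary, with the forced zero eigenvalue handled by transversality to the level sets of $j$. Your extra observation that the discriminant $\left(U^{\prime\prime}+8U^\prime\right)^2$ is strictly positive under the hypotheses, so the purely imaginary eigenvalues are distinct and no Jordan blocks can occur, is a careful refinement that the paper leaves implicit.
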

As a counterpoint we consider the two dimensional gravitational potential (after a suitable scaling).
\begin{example}{\em 
When $U^\prime(r)=1/r$ then
\[
C_2=-\frac{16s_5^2}{(1/4+s_5^2)^3}<0;
\]
hence $\widehat\chi$ has a positive real root, and every stationary point with $s_5\not=0$ is unstable.}
\end{example}
%%%%%%%%%%%%%%%%%%%%%%%%%%%%%%%%%%%%%%%%%%%%%%%%
\subsection{The special case of equal masses}\label{emsubec}
Here we take up the discussion of the further equilibria from subsection \ref{subsecone} in case $m_1=m_2$, with
\[
s_4=-\frac{m_1}{m_1+m_2}=-\frac12.
\]
In addition we have 
\[
s_5=s_6=0,\quad s_7=-\frac{s_3}2
\]
with $s_3$ arbitrary. The physical interpretation (using Remarks \ref{s5rem} and \ref{s4rem}) is straightforward: Both particles are on a straight line through the center,  on opposite sides with the same distance from the center. They rotate around the center, and the value of $s_3$, in view of $\eta_3=\det(v,u)$, determines the corresponding angular velocity and the angular momentum of the system. Linear orbital stability of the relative equilibria is determined by the roots of the quadratic polynomial
\[
\begin{array}{rcl}
\widehat\chi(\tau)&:=&\tau^2+(2s_3^2+U^{\prime\prime}(1/4)+4U^\prime(1/4))\tau\\
&&+s_3^4-s_3^2\left(U^{\prime\prime}(1/4)+4U^\prime(1/4)\right)+2U^{\prime\prime}(1/4)U^\prime(1/4)+4U^\prime(1/4)^2
\end{array}
\]
with discriminant
\[
D=8s_3^2\left(U^{\prime\prime}(1/4)+4U^\prime(1/4)\right)+U^{\prime\prime}(1/4)^2.
\]
Note that the discriminant becomes negative with increasing $s_3$ in case $U^{\prime\prime}(1/4)+4U^\prime(1/4)<0$; this implies instability. Whenever  $U^{\prime\prime}(1/4)+4U^\prime(1/4)\geq 0$ and $U^{\prime\prime}(1/4)\not=0$ one obtains stability for sufficiently large $s_3^2$ (i.e. sufficiently high veloities), by arguments similar to those used above.
\begin{example}{\em
Here we also look at the special case of gravitation in two dimensions; i.e., $U(r)=1/r$. One obtains
\[
\widehat\chi(\tau)=\tau^2+2s_3^2\tau+(s_3^2+8)(s_3^2-8)
\]
with constant discriminant $D=256$. For $s_3^2<8$ there is a positive root of $\widehat\chi$, and the corresponding relative equilibria are
unstable; for $s_3^2>8$ one has linear orbital stability. Roughly speaking, a
sufficiently fast rotation of the system around the center is linearly
orbitally stable in the present context.}
\end{example}
\section{Concluding remarks}
The example discussed in the present paper was chosen specifically to provide a nontrivial but easily manageable  illustration of the reduction method introduced in \cite{SchrWa}, and its adaptation to Hamiltonian systems. Thus, one criterion for the choice was that the polynomial invariant algebra of the group action does not admit a convenient (``small '') generator set, so that the use of localizations is necessary. On the other hand, the system was chosen with a view on computational convenience. \\
Compared to physically inspired and geometrically motivated reductions, the approach presented here is motivated by an emphasis on explicit computations.\\
A more involved application will be discussed in a forthcoming paper on the double spherical pendulum.\\
Essentially the same reduction procedure works for any Hamiltonian system in $\mathbb R^n$ that admits an $s$--dimensional toral symmetry group (compatible with the Poisson structure). This allows reduction to an $n-s$--dimensional system that admits $s$ independent first integrals; cf.~Schroeders \cite{SchrDiss}. Proposition \ref{redpropo} and its proof can readily be modified for this setting. The discussion of reduction with respect to non-abelian groups is a bit more involved, since suitable localizations are less easy to determine.\\

\noindent {\bf Acknowledgement.} The authors gratefully acknowledge support by the Research in Pairs program of MFO (Mathematisches Forschungsinstitut Oberwolfach) in July and August 2017.

\end{document}